\let\over\@@over\makeatother
\newcommand{\be}{\begin{equation} }
\newcommand{\ee}{\end{equation}}
\newcommand{\bse}{\begin{subequations}}
\newcommand{\ese}{\end{subequations}}
\numberwithin{equation}{section}
\theoremstyle{plain} 
\newtheorem{theorem}{Theorem}[section] 
\newtheorem{proposition}{Proposition}[section] 
\newtheorem{corollary}[theorem]{Corollary}
\newtheorem{lem}{Lemma}[section]
\newtheorem{definition}[theorem]{Definition}
\theoremstyle{remark}
\newtheorem{remark}{Remark}[section]
\newcommand{\LV}{\left|}
\newcommand{\RV}{\right|}
\newcommand{\LB}{\left[}
\newcommand{\RB}{\right]}
\newcommand{\LC}{\left(}
\newcommand{\RC}{\right)}
\newcommand{\R}{\mathbb{R}}
\newcommand{\even}{\mathrm{e}}      
\newcommand{\bdd}{\mathrm{b}}       
\newcommand{\F}{{\mathscr F}}       
\newcommand{\cm}{{\mathscr C}}      
\newcommand{\loc}{{\mathrm{loc}} }  
\newcommand\Xspace{\mathscr X}
\newcommand\Yspace{\mathscr Y}
\newcommand{\placeholder}{\;\cdot\;}
\newcommand{\n}[2][]{#1\lVert #2 #1\rVert}
\title[Solitary waves of the Boussinesq $abcd$ system]{Global bifurcation of solitary waves to the Boussinesq $abcd$ system}
\author[R. M. Chen]{Robin Ming Chen} 
\address{Department of Mathematics, University of Pittsburgh, Pittsburgh PA 15260}
\email{mingchen@pitt.edu}
\author[J. Jin]{Jie Jin}
\address{Department of Mathematics, University of Pittsburgh, Pittsburgh PA 15260}
\email{jij50@pitt.edu} 
\begin{document}

\begin{abstract}
The Boussinesq $abcd$ system arises in the modeling of long wave small amplitude water waves in a channel, where the four parameters $(a,b,c,d)$ satisfy one constraint. In this paper we focus on the solitary wave solutions to such a system. In particular we work in two parameter regimes where the system does not admit a Hamiltonian structure (corresponding to $b \ne d$). We prove via analytic global bifurcation techniques the existence of solitary waves in such parameter regimes. Some qualitative properties of the solutions are also derived, from which sharp results can be obtained for the global solution curves. 

Specifically, we first construct solutions bifurcating from the stationary waves, and obtain a global continuous curve of solutions that exhibits a loss of ellipticity in the limit. The second family of solutions bifurcate from the classical Boussinesq supercritical waves. We show that the curve associated to the second class either undergoes a loss of ellipticity in the limit or becomes arbitrarily close to having a stagnation point.  
\end{abstract}

\thispagestyle{empty}
\maketitle

\setcounter{tocdepth}{1}
\tableofcontents



\section{Introduction}\label{sec:intro}

The phenomenon of solitary wave was first observed by John Scott Russel \cite{Russell} almost two centuries ago, which is later used to characterize wave that does not disperse and retains its original identity as time evolves. Exact existence theory for solitary water waves, however, first appeared more than a century later in the work of Lavrentiev \cite{Lavrentiev1954}, Friedrichs-Hyers \cite{FriedrichsHyers1954}, and Ter-Krikorov \cite{Ter-Krikorov1960} for small-ampliltude irrotational waves. Construction for large-amplitude irrotational waves was achieved by Amick--Toland \cite{AT1,AT2} and Benjemin--Bona--Bose \cite{BBB}.

Russel's experiment also motivated the studies on the mathematical modeling of water waves. The first works can be dated back to Boussinesq \cite{Bous}, Rayleigh \cite{Rayleigh}, Korteweg and de Vries \cite{KdV}, where simpler sets of equations were derived as asymptotic models from the free surface Euler equations in some specific physical regimes. To be more precise, let $h$ and $\lambda$ denote respectively the mean elevation of the water over the bottom and the typical wavelength, and let $a$ be a typical wave amplitude. The parameter regime considered in the above works corresponds to 
\begin{equation*}
\varepsilon = {a\over h} \ll 1, \quad \delta = {h \over \lambda} \ll 1, \quad \varepsilon = O(\delta^2),
\end{equation*}
which is called the small amplitude, shallow water regime. Physically, $\varepsilon$ measures the strength of nonlinearity while $\mu$ characterizes the effect of dispersion. Thus solitary waves can be viewed as generated from a perfect balance between nonlinear and dispersive effects. 
The reduced systems within the above scaling regime couple the free surface elevation $\eta$ to the horizontal component of the velocity $u$, and include the celebrated KdV equation and the Boussinesq equation \cite{Bous, Craig, KanoNishida, KdV, Ursell}.

In this paper we will consider solitary wave solutions to an asymptotic water wave model derived by 
Bona--Chen--Saut \cite{BCS1} (generalized to include the surface tension in \cite{Daripa_Dash} and in higher dimensions Bona--Colin--Lannes \cite{BonaColinLannes}) as an extended system of the classical Boussinesq equation. Specifically, it is a three-parameter family of Boussinesq systems for one dimensional surfaces that takes the following form
\begin{equation}\label{1dBoussinesq}
\left\{\begin{array}{l}
\eta_t + u_x + (u\eta)_x + a u_{xxx} - b \eta_{xxt} = 0, \\
u_t + \eta_x + {1\over2}(u^2)_x + c \eta_{xxx} - d u_{xxt} = 0,
\end{array}\right.
\end{equation}
all of which are formally equivalent models of solutions of the Euler equations.
In the above system $\eta$ is proportional to the deviation of the free surface from its rest position, ${u}$ is proportional to the horizontal velocity taken at the scaled height $0\leq \theta \leq 1$ ($\theta = 1$ at the free surface and $\theta = 0$ at the bottom). 
The parameters have the following explicit form
\begin{equation*}
\displaystyle a = \left({\theta^2\over2} - {1\over6} \right)\nu,\quad  \displaystyle b = \left({\theta^2\over2} - {1\over6} \right)(1 - \nu) ,\quad
\displaystyle c = {(1-\theta^2)\over2}\mu - \tau, \quad  \displaystyle d = {(1-\theta^2)\over2}(1 - \mu)
\end{equation*}
with $\nu$ and $\mu$ arbitrary real numbers, and $\tau\geq0$ is the normalized surface tension. These three degrees of freedom arise from the height at which the horizontal velocity is taken and from a double use of the {\it BBM trick} \cite{BBM}. The hydrodynamic relevance of the model was justified in \cite{BonaColinLannes, Chazel, SautXu}.

A solitary wave solution to system \eqref{1dBoussinesq} is of the type 
\begin{equation}\label{solitaryform}
\eta(x,t) = \eta(\xi) = \eta(x- \lambda t)\in H^1(\R), \quad u(x,t) = u(\xi) = u(x- \lambda t)\in H^1(\R),
\end{equation}
where $\lambda$ denotes the traveling speed and $\xi = x - \lambda t$ is the moving coordinate with speed $\lambda \in \R$. We are thus looking in the class of ``localized" solutions to the system
\begin{equation}\label{solieqn}
\left\{\begin{array}{l}
c\eta^{\prime\prime} + \eta - \lambda u + d \lambda u^{\prime\prime} + {1\over2} u^2 = 0,\\
au^{\prime\prime} + u -\lambda \eta + b \lambda \eta^{\prime\prime} + \eta u = 0,
\end{array}\right.
\end{equation}
where $\prime$ denotes the derivative with respect to $\xi$. The regularity of the solutions indicates the asymptotic behavior 
\begin{equation}\label{asymptotics}
\displaystyle \lim_{|x|\to \infty} (\eta, u) = (0,0).
\end{equation}

Note that when $b=d$, the system possesses a Hamiltonian structure with Hamiltonian
\begin{equation}\label{hamiltonian}
\mathcal{H}(\eta, u) = {1\over2} \int \LB -c\eta_x^2 - au_x^2 + \eta^2 + (1 + \eta) u^2 \RB dx.
\end{equation}
The solitary waves correspond to the critical points of the action functional $S_{\lambda} = \mathcal{H} - \lambda \mathcal{I}$, where 
\[
\mathcal{I}(\eta, v) = \int \LC \eta u + b \eta_x u_x \RC dx
\]
is called the impulse functional, and the Lagrange multiplier $\lambda$ gives the speed of the wave. 

From \eqref{hamiltonian} we see that the Hamiltonian $\mathcal H(\eta,u)$ is coercive in $H^1$ provided that $a, c < 0$. In this parameter regime, the existence of solitary waves can be inferred from the existence of minimizers to a constraint minimization problem \cite{CNS} under the assumptions that the surface tension is large ($\tau>1/3$) and $\|\eta\|_{H^2}$ is small.
Later in \cite{CNS1} another variational formulation was adapted in the same parameter regime to establish the existence of solitary waves for any $\tau\geq0$, but with a smallness restriction on the traveling speed $\lambda$. Using a Nehari manifold technique, the existence of ground state solutions (nontrivial solitary waves carrying minimum action energy $S_\lambda$) was established in \cite{BCL}. In the case of large surface tension $\tau > 1/3$, these ground states are shown to be depression waves which are symmetric and increasing from their unique troughs, consistent with the results in the context of two-dimensional full gravity-capillary water waves \cite{AH,Kirchgassner, Sachs}.

All the above analytical results are crucially based upon the Hamiltonian structure of the system, i.e., $b = d$. Our main goal is to extend the existence result to the cases when the parameters fall out of this regime. In particular, we will focus on pure gravity waves, corresponding to $\tau = 0$, and allow either (i) $b\ne d$, so that the Hamiltonian structure is no longer available; (ii) $a, c > 0$, so that the quadratic part of the Hamiltonian \eqref{hamiltonian} is not positive definite; or (iii) the wave speed $\lambda$ is large $|\lambda| > 1$, so that the action functional fails to be bounded from below. In all cases, the standard variational method seems hard to apply. 

The main tool we are using is the bifurcation theory. For this to work we need to first choose a good parameter $s \in \R$ with which the problem \eqref{solieqn} can be formulated as an abstract one-parameter problem
\[
\mathscr{F}(U, s) = 0
\]
where $U := (u, \eta)$. The perturbative construction of solutions relies on a good understanding of the linearized operator $\mathscr{F}_U$ at some special solution $(U_0, s_0)$. It turns out that the translation invariance of the problem naturally generates a nontrivial kernel of the linearized operator $\mathscr{F}_U$ at any solution. With some appropriate choices of the ``base point solution" $(U_0, s_0)$, standard ODE techniques can be applied to ensure that the kernel is exactly one dimensional and hence can be removed by suitable choice of the function spaces, allowing us to invoke the Implicit Function Theorem to obtain a local curve of solutions.  

As is common for the solitary wave problem, continuing the local curve globally by standard global bifurcation techniques faces a serious obstruction due to the unboundedness of the domain. One classical approach is to approximate the solitary waves by periodic ones as the period tends to infinity. Such a method is used by Toland \cite{Toland} to treat \eqref{solieqn} with $(a,b,c,d) = (0, \frac13, -\frac13, \frac13)$. He first obtains a global bifurcation theory for the periodic problems, and then proves a uniform estimate. Together with an application of the Whyburn lemma, this leads to the convergence of the global sets of periodic solutions to a global connected set of solitary wave solutions as the period goes to infinity. 

We will adapt a recently developed analytic global implicit function theorem in \cite{CWW} for the global theory, cf. Theorem \ref{thm global bif}. As is pointed out in \cite{CWW}, the global curve may not be locally pre-compact, nor can one assume a priori that Fredholmness persists. Thus the loss of compactness emerges as an alternative. The ODE nature of the problem easily rules out the failure of Fredholmness. Therefore the theory will become useful in practice if we can rule out the loss of properness or classify how it manifests.

More specifically, we will consider global branches of solutions emanating from two base point solutions: the first one being the stationary solution (corresponding to $\lambda = 0$), and the second being the supercritical ($\lambda > 1$) waves to the classical Boussinesq system (corresponding to $(a,b,c,d) = (0,0,0,\frac13)$). We will also assign different parameters when studying these two types of waves. When bifurcating from the stationary waves, we use the wave speed $\lambda$ as the bifurcation parameter while fixing the $abcd$ system as in \eqref{parameters standing}, and obtain a continuous curve of solutions all the way into the regime where solutions are traveling with an $O(1)$ speed. For the other case we will fix an arbitrary supercritical speed $\lambda > 1$ and design a family of $abcd$ systems (as in \eqref{para curve 1}) that can accommodate solitary waves with such a speed $\lambda$. In both cases we prove a collection of qualitative properties of the solutions that are crucial for the final global result. In particular, using maximum principle arguments and the symmetry result for weakly coupled cooperative elliptic systems \cite{BS} we are able to obtain local uniqueness, local monotonicity, and nodal pattern of the solutions. The fact that we are always considering a {\it system} makes the maximum arguments more delicate, and possibly more restrictive; see Section \ref{subsec bif stationary}--\ref{subsec nodal} and Section \ref{subsec B local}--\ref{subsec nodal1}. 

Regarding the ruling-out/realization of the loss of compactness alternative in the global theory, as was studied in \cite{CWW1,CWW}, the established monotonicity property is strong enough to assert a ``compactness or front'' result stating that this possibility must manifest as a broadening phenomenon, leading to a {\it monotone front} type of solution at the end of the bifurcation curve. When the underlying system possesses a Hamiltonian structure, a so-called conjugate flow analysis can be carried out utilizing the conserved quantities to rule out the broadening alternative \cite{AW,CWW1,CWW,CWW2,Hogancamp,Sinambela}. Moreover, for some particular problems such a Hamiltonian structure may also allow one to obtain uniform bounds on solutions that can account for the realization of broadening \cite{Hogancamp}. In the cases we consider, however, the system is not Hamiltonian, and we do not have any obvious conserved quantities that can be of much use to control the solutions. Taking advantage of the monotonicity and together with delicate algebra we are able to prove the nonexistence of monotone front solutions, cf. Lemma \ref{lem no fronts} and Lemma \ref{lem no fronts 1}. Using this idea we can also prevent the blowup of solutions $(u, \eta)$ in the case of bifurcation from stationary waves, which leads to a sharp result ensuring the loss of ellipticity as the only remaining alternative cf. Theorem \ref{thm global slow}. For the other case of solutions bifurcating from the classical Boussinesq waves, we are able to winnow the alternatives down to the possibilities of either the loss of ellipticity or that the curve continues up to the appearance of an ``extreme wave'' that has a stagnation point, cf. Theorem \ref{thm global bif fast}.


\section{Bifurcation from stationary waves with $a = c < 0$} \label{sec stationary}
We start by constructing solutions near the stationary waves corresponding to $\lambda = 0$. To ensure ellipticity we will impose the sign condition $a, c < 0$.

\subsection{Stationary solutions}\label{subsec stationary}
Note that in the case when $\lambda=0$ the terms in system \eqref{solieqn} containing $b$ and $d$ disappear and becomes
\begin{equation}\label{standing eqn}
\left\{\begin{array}{ll} 
-c\eta ^{\prime \prime }=\eta +{\frac{u^{2}}{2}}, & \\
-au^{\prime \prime }=u(1+\eta ).\vspace{0.1in} & 
\end{array}\right.
\end{equation}
By elliptic regularity we know that any solution of \eqref{standing eqn} is smooth and $\displaystyle \lim_{|x|\to \infty} (\eta^\prime, u^\prime) = (0,0)$. Hence solitary wave solutions satisfy the `first integral' property
\begin{equation}\label{eq1aav}
-a(u^{\prime })^{2}-c(\eta^{\prime })^{2}=u^{2}(1+\eta )+\eta ^{2}. 
\end{equation}

The existence theory for \eqref{standing eqn} has been has been studied systematically in \cite{CHT}. Here we collect some resulta that will be important for the later bifurcation argument. For the reader's convenience we provide their proofs in Appendix \ref{appendix}.
\begin{lem}\label{lem eta sign standing} 
Any solitary wave solution of \eqref{standing eqn} satisfies  
\begin{equation*}
\eta(x)<0 \quad \text{on } \R.
\end{equation*}
\end{lem}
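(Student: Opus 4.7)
The plan is to argue by a maximum principle on the $\eta$-equation alone, with the degenerate equality case handled by the first integral together with ODE uniqueness. Throughout, ``solitary wave'' is taken to mean nontrivial, and one has $-a, -c > 0$.

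First I would rewrite the $\eta$-equation in the form $-c\eta'' - \eta = u^2/2 \geq 0$. By the asymptotics in \eqref{asymptotics} and elliptic regularity, $\eta \in C^2(\R)$ with $\eta(x) \to 0$ as $|x| \to \infty$. If $\eta < 0$ everywhere, we are done; otherwise $\sup_\R \eta \geq 0$ and, since $\eta$ decays at infinity, this supremum is attained at some interior point $x_0 \in \R$. At such a maximum one has $\eta'(x_0) = 0$ and $\eta''(x_0) \leq 0$. Substituting into the equation yields
\[
0 \geq -c\,\eta''(x_0) = \eta(x_0) + \tfrac{1}{2} u(x_0)^2,
\]
which forces $\eta(x_0) \leq 0$. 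Combined with $\eta(x_0) \geq 0$, this gives the degenerate equality $\eta(x_0) = 0$ and $u(x_0) = 0$.

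The second (and main) step is to rule out this degenerate case. For this I would evaluate the first integral \eqref{eq1aav} at $x_0$: the right-hand side vanishes because $\eta(x_0) = u(x_0) = 0$, while the left-hand side is $-a\,u'(x_0)^2 - c\,\eta'(x_0)^2$, a sum of nonnegative terms since $-a, -c > 0$. We already have $\eta'(x_0) = 0$, so this forces $u'(x_0) = 0$ as well. Thus the Cauchy data $\bigl(u(x_0), u'(x_0), \eta(x_0), \eta'(x_0)\bigr)$ all vanish.

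Finally, system \eqref{standing eqn} is a second-order ODE system for $(u, \eta)$ with polynomial (hence locally Lipschitz) right-hand side, so the Picard uniqueness theorem applies to the equivalent first-order system in $\R^4$. The trivial function $(u,\eta) \equiv (0,0)$ already solves the initial value problem with zero data at $x_0$, so uniqueness forces $(u,\eta) \equiv (0,0)$ on all of $\R$, contradicting nontriviality of the solitary wave. Hence the assumption $\sup \eta \geq 0$ is untenable, and $\eta(x) < 0$ for every $x \in \R$.

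I expect the main obstacle to be exactly the degenerate case where $\eta$ touches zero with $u$ simultaneously vanishing, since the plain scalar maximum principle on the $\eta$-equation cannot distinguish it from a nontrivial solution; it is essential here that the first integral \eqref{eq1aav} is a sum of definite-sign terms (which uses both $a<0$ and $c<0$) to conclude that the full Cauchy data must be trivial, after which ODE uniqueness closes the argument.
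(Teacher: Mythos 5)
Your proposal is correct and follows essentially the same route as the paper's own proof: locate a non-negative maximum of $\eta$, use the $\eta$-equation to force $\eta(x_0)=u(x_0)=0$, then invoke the first integral \eqref{eq1aav} (with $a,c<0$) to kill $u'(x_0)$ and conclude by ODE uniqueness. No gaps.
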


\begin{proposition}[Existence and uniqueness of stationary waves \cite{CHT}]\label{prop standing}
When $a = c = -\beta^2 < 0$ we have 
\begin{enumerate}[label=\rm(\roman*)]
\item\label{standing soln 1} there is a solitary wave solution such that 
$u^-_0(x)<0$ on $\R$. Up to translation, 
\begin{equation}\label{soln standing nega}
u^-_0(x) = -{3\sqrt{2}\over2} \mathrm{sech}^2\left({x\over 2\beta} \right), \quad \mathrm{and} \quad \eta_0(x) = -{3\over2} \mathrm{sech}^2\left({x\over 2\beta} \right).
\end{equation}
This solution is unique among the class of functions $(u, \eta)$ where $u < \sqrt{2}$;
\item\label{standing soln 2} there is a solitary wave solution such that $%
u^+_0(x)>0$ on $\R$. Up to translation, 
\begin{equation}\label{soln standing posi}
u^+_0(x) = {3\sqrt{2}\over2} \mathrm{sech}^2\left({x\over 2\beta} \right), \quad \mathrm{and} \quad \eta_0(x) = -{3\over2} \mathrm{sech}^2\left({x\over 2\beta} \right).
\end{equation}
This solution is unique among the class of functions $(u, \eta)$ where $u > -\sqrt{2}$.
\end{enumerate}
\end{proposition}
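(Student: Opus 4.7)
The plan is to establish existence by a reduction ansatz and uniqueness by a maximum-principle argument on a cleverly chosen linear combination. With $a=c=-\beta^2$, system \eqref{standing eqn} reads $\beta^2\eta''=\eta+u^2/2$ and $\beta^2u''=u(1+\eta)$. I would first search for solutions satisfying $u=\pm\sqrt{2}\,\eta$; substituting this ansatz, both equations collapse to the single scalar KdV-type stationary equation
\begin{equation*}
\beta^2\eta''=\eta+\eta^2=\eta(1+\eta).
\end{equation*}
Multiplying by $\eta'$ and using the decay in \eqref{asymptotics} yields the first integral $\beta^2(\eta')^2=\eta^2(1+2\eta/3)$, and a phase-plane analysis gives a unique (up to translation) non-trivial orbit homoclinic to the origin, namely $\eta(x)=-\tfrac{3}{2}\mathrm{sech}^2(x/(2\beta))$. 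The two sign choices in the ansatz then deliver the explicit pairs in parts (i) and (ii).

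For uniqueness in part (i), the key observation is that the difference $\phi:=u-\sqrt{2}\,\eta$ satisfies a scalar linear equation whose zeroth-order coefficient depends only on $u$. Using both equations of \eqref{standing eqn} together with the identity $\eta-u/\sqrt{2}=-\phi/\sqrt{2}$, one computes
\begin{equation*}
\beta^2\phi''=u(1+\eta)-\sqrt{2}\bigl(\eta+\tfrac{1}{2}u^2\bigr)=\phi+u\bigl(\eta-\tfrac{u}{\sqrt{2}}\bigr)=\phi\Bigl(1-\frac{u}{\sqrt{2}}\Bigr).
\end{equation*}
Under the hypothesis $u<\sqrt{2}$ on $\R$ the coefficient is strictly positive everywhere, and since $\phi\in H^1(\R)$ vanishes at infinity, a standard maximum-principle argument forbids both interior positive maxima and interior negative minima, forcing $\phi\equiv 0$. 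Hence $u\equiv\sqrt{2}\,\eta$, the system reduces to the scalar equation above, and the uniqueness of its homoclinic orbit delivers the stated profile up to translation.

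Case (ii) is handled analogously by setting $\psi:=u+\sqrt{2}\,\eta$; an almost identical calculation shows $\beta^2\psi''=\psi(1+u/\sqrt{2})$, whose coefficient is strictly positive under the hypothesis $u>-\sqrt{2}$. I expect the only non-routine point to be the algebraic discovery of the right linear combination $\phi=u\mp\sqrt{2}\,\eta$ that linearises and partially decouples the system; once this is in hand, the maximum-principle step and the classification of scalar homoclinics are standard. The strictness of the sign conditions $u<\sqrt{2}$ and $u>-\sqrt{2}$ is essential here, as it is precisely what ensures the positivity of the coefficient and closes off the possibility of larger-amplitude competitors.
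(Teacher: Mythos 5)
Your proposal is correct and follows essentially the same route as the paper: the same linear combinations $h=u\mp\sqrt2\,\eta$ satisfying $\beta^2h''=(1\mp u/\sqrt2)h$, the same maximum-principle argument to force $h\equiv0$ under the sign hypothesis on $u$, and the same reduction to the scalar first integral $\beta^2(\eta')^2=\eta^2+\tfrac23\eta^3$ yielding the $\mathrm{sech}^2$ profile. The only cosmetic difference is that you derive the first integral by multiplying the reduced ODE by $\eta'$, whereas the paper substitutes $h=0$ directly into the already-established identity \eqref{eq1aav}.
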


Note that from elliptic regularity we easily see that $(u_0^\pm, \eta_0) \in H^\infty(\R) \times H^\infty(\R)$.

\subsection{Local theory}\label{subsec bif stationary}
Now we will construct a local curve of solutions nearby the stationary solution $(u_0,\eta_0)$. The parameters we are taking satisfy
\begin{equation}\label{parameters standing}
a =  c = -d = -\beta^2 < 0, \quad b = \frac13 + \beta^2.
\end{equation}
Obviously we see that $b \ne d$, and hence we are outside the Hamiltonian regime when the surface tension is small. For simplicity we will take $\tau = 0$ in the following discussion. To fit our argument in the framework of \cite{CWW}, we will consider the problem in H\"older spaces. 

Denote by $C_0(\R)$ the set of continuous functions vanishing at infinity and 
\[
C^{2+\alpha}_{\bdd}(\R) = \left\{ f\in C^2(\R): \ \| f \|_{C^{2+\alpha}} < +\infty  \right\}. 
\]
Define for $\alpha \in (0, 1)$ the following H\"older space
\begin{equation*}
\begin{split}
& \Xspace := \left( C^{2+\alpha}_{\bdd, \even}(\R) \cap C_0(\R) \right) \times \left( C^{2+\alpha}_{\bdd, \even}(\R) \cap C_0(\R) \right), \\ & \Yspace := \left( C^{\alpha}_{\bdd,\even}(\R) \cap C_0(\R) \right) \times \left( C^{\alpha}_{\bdd,\even}(\R) \cap C_0(\R) \right),
\end{split}
\end{equation*}
where the subscript `$\even$' denotes the restriction to even functions, and  The use of $C_0(\R)$ is to realize the asymptotic condition \eqref{asymptotics}.

Writing $U = (u, \eta)$, the system for solitary waves takes the following form
\begin{equation}\label{slow system}
\mathscr{F}(U, \lambda) := \begin{pmatrix} \displaystyle \mathcal L \left( u - \lambda\left( 1 + \frac{1}{3\beta^2} \right) \eta \right) + \left( \frac{\lambda}{3\beta^2} + u \right) \eta, \\\\
\displaystyle \mathcal L \left(\eta - \lambda u \right) + \frac12 u^2 \end{pmatrix} = 0,
%
\end{equation}
where 
\[
\mathscr F: \Xspace \to \Yspace,
\]
and $\mathcal L := 1 - \beta^2 \partial_x^2$ is an invertible operator from $C^{2+\alpha}_{\bdd, \even}(\R) \cap C_0(\R) \to C^{\alpha}_{\bdd, \even}(\R) \cap C_0(\R)$.
%

The discussion in Section \ref{subsec stationary} indicates that $\mathscr F(U^\pm_0, 0) = 0$ where $U^\pm_0 := (u^\pm_0, \eta_0)$. The linearized operator at the solution $(U^\pm_0,0)$ is 
\begin{equation}\label{lin op stationary}
\mathscr F_{U}(U^\pm_0, 0)[V] = \mathcal{L} V + \begin{pmatrix} \eta_0 & u^\pm_0 \\ u^\pm_0 & 0 \end{pmatrix} V,
\end{equation}
where $V := (v, \zeta) \in \Xspace$. 
The following lemma states that the kernel of $\mathscr F_{U}(U^\pm_0, 0)$ is only generated by the translation symmetry.  
\begin{lem}\label{lem kernel stationary}
For any given $\beta >0$, $\mathscr F_{U}(U^\pm_0, 0): \Xspace \to \Yspace$ is injective.
\end{lem}
\begin{proof}
Let $V = (v, \zeta) \in \textup{ker} \mathscr F_{U}(U^\pm_0, 0)$. Then $V$ satisfies
\begin{equation}\label{U eqn}
\mathcal{L} V + \begin{pmatrix} \eta_0 & u^\pm_0 \\ u^\pm_0 & 0 \end{pmatrix} V = 0.
\end{equation}
Notice that the Green's function for $\mathcal{L}^{-1}$ is $G(x) = \frac{1}{2\beta} e^{-|x|/\beta}$. Therefore
\begin{equation*}
\begin{split}
V(x) & = - G(x) \ast \left[ \begin{pmatrix} \eta_0 & u^\pm_0 \\ u^\pm_0 & 0 \end{pmatrix} V \right](x) \\
& = - \int_{\R} G(x-y) \begin{pmatrix} \eta_0(y) & u^\pm_0(y) \\ u^\pm_0(y) & 0 \end{pmatrix} V(y) \,dy \\
& = -\frac{1}{G(x)} \int_{\R} \frac{G(x-y) G(y)}{G(x)} \frac{1}{G(y)}\begin{pmatrix} \eta_0(y) & u^\pm_0(y) \\ u^\pm_0(y) & 0 \end{pmatrix} V(y) \,dy.
\end{split}
\end{equation*}
Since 
\[
\LV \frac{G(x-y) G(y)}{G(x)} \RV \lesssim 1, \qquad \LV \frac{\eta_0(y)}{G(y)} \RV + \LV \frac{u^\pm_0(y)}{G(y)} \RV \lesssim 1,
\]
we conclude that $V$ decays exponentially
\be\label{U exp decay}
\LV G(x) V(x) \RV \lesssim 1. 
\ee

Expanding \eqref{U eqn} into a $4\times 4$ first order ODE system and checking the asymptotics we find that there are only two bounded solution branches, and they have the asymptotic behavior
\[
e^{-|x|/\beta} \qquad \text{and} \qquad |x| e^{-|x|/\beta} \qquad \text{as } \ |x| \to \infty.
\]
Together with \eqref{U exp decay} we know that the space of bounded solutions to \eqref{U eqn} is at most one-dimensional. Recalling from the translation invariance that 
\[
\mathscr F_{U}(U^\pm_0, 0) [(U^\pm_0)^\prime] = 0,
\]
it follows that $(U^\pm_0)^\prime$ is the only bounded solution to \eqref{U eqn}. Finally the parity condition yields the desired result. 
\end{proof}

The spectral property of $\mathscr F_{U}(U^\pm_0, 0)$ given by Lemma \ref{lem kernel stationary} allows a use of the Implicit Function Theorem. 
%
Notice that for if $(u, \eta, \lambda)$ is a solution to \eqref{slow system} with $\lambda > 0$, then so is $(-u, \eta, -\lambda)$. In fact this corresponds to the same wave propagating in the opposite direction. Therefore in the following analysis we will only consider the case $\lambda > 0$.

\begin{theorem}[Nearly stationary waves]\label{thm slow wave existence}
For any $\beta\in \R$ there exists some positive $\lambda_0 > 0$ and a $C^0$ solution curve 
\[
\cm^{\textup{slow}}_{\textup{loc}} = \{ (u^\pm(\lambda), \eta(\lambda), \lambda): \ 0 \le \lambda < \lambda_0 \} \subset \Xspace \times \R
\]
to problem \eqref{slow system} with the property that 
\begin{align}
& u(\lambda) = u^+_0 + O(\lambda), \quad \eta(\lambda) = \eta_0 + O(\lambda) \quad \text{in }\ \Xspace, \label{local starting pt} \\
& u(\lambda) > 0 \quad \text{and } \quad \eta(\lambda) < 0, \label{local sign fast u}
\end{align}
where $(u^+_0, \eta_0)$ is given in \eqref{soln standing posi}.
\end{theorem}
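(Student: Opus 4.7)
The plan is to apply the Implicit Function Theorem at the base point $(U_0^+, 0)$, working on the even subspace to quotient out the translation kernel identified in Lemma \ref{lem kernel stationary}. First I would verify that $\mathscr F$ restricts to a smooth map
\[
\mathscr F \colon H^2_\even(\R) \times H^2_\even(\R) \times \R \to L^2_\even(\R) \times L^2_\even(\R),
\]
which holds because $\mathcal L = 1 - \beta^2 \partial_x^2$ preserves parity and the nonlinear terms are pointwise polynomial in $u, \eta$. The base point $(U_0^+, 0)$ lies in this domain since the profiles in \eqref{soln standing posi} are even, and $\mathscr F$ is analytic in $(U,\lambda)$ because it is polynomial in $U$ and affine in $\lambda$.

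The main analytic input is that $\mathscr F_U(U_0^+, 0)$ is an isomorphism on the even subspace. Write $\mathscr F_U(U_0^+, 0) = \mathcal L \, I + M$, where $M$ is multiplication by the matrix in \eqref{lin op stationary} whose entries $u_0^+, \eta_0$ decay exponentially. Since $\mathcal L\maps H^2(\R) \to L^2(\R)$ is an isomorphism, we can factor
\[
\mathscr F_U(U_0^+, 0) = \mathcal L \bigl( I + \mathcal L^{-1} M \bigr),
\]
and $\mathcal L^{-1} M \maps H^2 \times H^2 \to H^2 \times H^2$ is compact (by Rellich--Kondrachov combined with the exponential decay of the multiplier). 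Consequently $\mathscr F_U(U_0^+, 0)$ is Fredholm of index zero on $H^2 \times H^2$, and by restriction also on $H^2_\even \times H^2_\even$. Lemma \ref{lem kernel stationary} identifies the kernel on the full space as $\textup{span}\{(U_0^+)'\}$; since $(U_0^+)'$ is odd, the kernel on the even subspace is trivial, so the map is an isomorphism there.

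Applying the analytic Implicit Function Theorem then produces an analytic (in particular $C^0$) curve $\lambda \mapsto (u(\lambda), \eta(\lambda))$ defined for $\lambda \in [0, \lambda_0)$ for some $\lambda_0 > 0$, valued in $H^2_\even \times H^2_\even$, with $(u(0), \eta(0)) = (u_0^+, \eta_0)$. Differentiating the curve at $\lambda = 0$ gives the expansion \eqref{local starting pt}.

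It remains to establish the sign properties \eqref{local sign fast u}. By Sobolev embedding $H^2(\R) \hookrightarrow C^1_b(\R)$, convergence $u(\lambda) \to u_0^+$ and $\eta(\lambda) \to \eta_0$ holds uniformly on $\R$. On any compact interval $[-R, R]$, the profiles $u_0^+$ and $-\eta_0$ are bounded below by a strictly positive constant, so for sufficiently small $\lambda$ the perturbed solutions satisfy $u > 0$ and $\eta < 0$ on $[-R, R]$. For $|x| > R$ the main obstacle is that both base profiles decay to zero; here I would use the Green's function argument from the proof of Lemma \ref{lem kernel stationary} applied to the full equation \eqref{slow system} to show that $u(\lambda) - u_0^+$ and $\eta(\lambda) - \eta_0$ decay exponentially with the same rate $e^{-|x|/\beta}$ as $u_0^+$ and $\eta_0$, with a constant that is $O(\lambda)$. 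Choosing $R$ and then $\lambda_0$ accordingly ensures that $u(\lambda)$ inherits the positivity of $u_0^+$ and $\eta(\lambda)$ the negativity of $\eta_0$ on all of $\R$. Proving this last exponential decay estimate uniformly in $\lambda$ is the step that requires the most care; the rest follows standard perturbation arguments.
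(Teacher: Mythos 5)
Your Implicit Function Theorem step is essentially the paper's: the paper also works on $H^2_\even\times H^2_\even$, uses Lemma \ref{lem kernel stationary} to see that the translation mode $(U_0^+)'$ is odd and hence absent from the even subspace, and invokes the IFT to get the curve and \eqref{local starting pt}. That part is fine.

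The gap is in your proof of the sign condition \eqref{local sign fast u}. You reduce the tail region $|x|>R$ to the claim that $u(\lambda)-u_0^+$ and $\eta(\lambda)-\eta_0$ are bounded by $O(\lambda)\,e^{-|x|/\beta}$, i.e.\ an $O(\lambda)$ bound in the exponentially weighted norm with the \emph{same} weight as the base profiles. This claim is false, most visibly for $\eta$. Linearizing \eqref{slow system} at the zero state, the second equation gives $\mathcal L(\eta-\lambda u)=0$ in the far field, and eliminating $u$ shows that for $\lambda\neq 0$ the decaying spatial modes carried by $\eta$ have rate $\beta^{-1}\sqrt{1+\lambda^2/(3\beta^2 B)}$ with $B=1-\lambda^2(1+\tfrac{1}{3\beta^2})$, which is \emph{strictly larger} than the rate $\beta^{-1}$ of $\eta_0\sim -6e^{-|x|/\beta}$. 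Hence for each fixed $\lambda>0$ one has $\eta(\lambda)(x)=o(e^{-|x|/\beta})$, so $|\eta(\lambda)(x)-\eta_0(x)|\,e^{|x|/\beta}\to 6$ as $|x|\to\infty$: the constant in your weighted bound is $O(1)$, not $O(\lambda)$, no matter how small $\lambda$ is. (Even setting this aside, the Green's function bootstrap you cite produces at best $(1+|x|)e^{-|x|/\beta}$ because the source decays at exactly the resonant rate of $\mathcal L^{-1}$, and $C\lambda(1+|x|)e^{-|x|/\beta}<c\,e^{-|x|/\beta}$ fails once $|x|\gtrsim \lambda^{-1}$.) So the comparison-with-the-base-profile strategy cannot close in the far field, and the hardest part of the theorem is left unproved.

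The paper avoids weighted estimates entirely and argues by the maximum principle on the decoupled scalar equations. From \eqref{slow system} one first gets $\lambda u\ge\eta$; then \eqref{eqn eta} has zeroth-order coefficient $1-\lambda^2-\lambda u>0$ for small $\lambda$, so the maximum principle gives $\eta\le 0$, and a touching-point argument (if $\eta(x_0)=0$ then $\eta''(x_0)=u(x_0)=0$, whence $u'(x_0)=\eta'(x_0)=0$ and ODE uniqueness forces $(u,\eta)\equiv 0$) upgrades this to $\eta<0$. For $u$ one assumes $\inf u<-\tfrac{\lambda}{3\beta^2}$, locates a first zero $x_1>R_0$ using \eqref{cond R0 u eta}, and applies the maximum principle to \eqref{rewrite eqn u} on $[x_1,\infty)$ where $|\eta|$ is small to reach a contradiction; then $\inf u\ge -\tfrac{\lambda}{3\beta^2}$ fed back into \eqref{eqn u} yields $u>0$. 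You should replace your tail-comparison step with an argument of this type (or otherwise prove the sign directly from the equations rather than from proximity to the base profile).
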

\begin{proof}
The proof of the existence and uniqueness of the solution curves and \eqref{local starting pt} follows from Lemma \ref{lem kernel stationary} and a direct application of the Implicit Function Theorem. 


Applying the maximum principle to the second equation of \eqref{slow system} we see that $\lambda u \ge \eta$. 
From \eqref{slow system} we also have
\begin{equation}\label{eqn eta}
-\beta^2\LB 1 - \lambda^2 \LC 1 + \frac{1}{3\beta^2} \RC \RB \eta^{\prime\prime} +(1 - \lambda^2 - \lambda u) \eta + \frac12 u^2 = 0.
\end{equation}
From \eqref{local starting pt} we know that for $\lambda$ sufficiently small $1 - \lambda^2 - \lambda u > 0$. Therefore, from the maximum principle we conclude that $\eta \le 0$. If there is an $x_1$ such that $\eta(x_1) = 0 = \max \eta$, then we have $\eta'(x_0) = 0$. Substituting this into the above equation leads to $\eta^{\prime\prime}(x_0) = u(x_0) = 0$. Hence $(\eta - \lambda u)(x_0) = 0$. Since $\eta - \lambda u \le 0$, we see that $(\eta - \lambda u)(x_0) = \max (\eta - \lambda u)$, and thus $u^\prime(x_0) = 0$. The uniqueness of ODE then implies that $(\eta, u) \equiv 0$, a contradiction. Therefore we must have
\[
\eta < 0.
\]

Direct calculation yields the equation for $u$ as
\begin{equation}\label{eqn u}
\begin{split}
-\beta^2\LB 1 - \lambda^2 \LC 1 + \frac{1}{3\beta^2} \RC \RB u^{\prime\prime} & + \LB 1 - \lambda^2 \LC 1 + \frac{1}{3\beta^2} \RC + \frac{\lambda}{2} \LC 1 + \frac{1}{3\beta^2} \RC u \RB u \\
& + \LC \frac{\lambda}{3\beta^2} + u \RC \eta = 0.
\end{split}
\end{equation}
From \eqref{local starting pt} and \eqref{soln standing posi} we know that for any $\varepsilon > 0$ there exist $\lambda>0$ sufficient small and $R_0>0$ sufficiently large such that
\begin{equation}\label{cond R0 u eta}
\begin{split}
& \left\| u - u^+_0 \right\|_{C^2(\R)} + \left\| \eta - \eta_0 \right\|_{C^2(\R)} + \left\| u^+_0 \|_{C^0(|x| \ge R_0) } + \| \eta_0 \right\|_{C^0(|x| \ge R_0) } < \varepsilon, \\
& u > 0 \qquad \text{for} \quad |x| < R_0.
\end{split}
\end{equation} 

If $\inf u < -\frac{\lambda}{3\beta^2} < 0$, 
then from the above equation we know that there exists some $x_0 > R_0$ such that $u(x_0) = \inf u$. 
Continuity then yields the existence of $x_1$ with $x_1 > R_0$ and $u(x_1) = 0$ such that
\[
x_1 = \min \{x > 0: \ u(x) = 0 \}.
\]
Rewriting \eqref{eqn u} as
\begin{equation}\label{rewrite eqn u}
-\beta^2\LB 1 - \lambda^2 \LC 1 + \frac{1}{3\beta^2} \RC \RB u^{\prime\prime} + \LB 1 - \lambda^2 \LC 1 + \frac{1}{3\beta^2} \RC + \eta + \frac{\lambda}{2} \LC 1 + \frac{1}{3\beta^2} \RC u \RB u  + \frac{\lambda}{3\beta^2} \eta = 0,
\end{equation}
we see from \eqref{cond R0 u eta} that $|\eta| < 2\varepsilon$ on $[x_1, +\infty)$. Thus for $\lambda$ and $\varepsilon$ sufficiently small, applying the maximum principle on $[x_1, +\infty)$ yields that
\[
u \ge 0 \qquad \text{on }\ [x_1, +\infty),
\]
which is a contradiction. 

Therefore 
\[
\inf u \ge -\frac{\lambda}{3\beta^2}.
\]
Substituting this into \eqref{eqn u}, from the maximum principle we can infer that $u > 0$, which is \eqref{local sign fast u}.
%
%
\end{proof}

To investigate further the qualitative properties of the solutions, let us first recall the following result of \cite[Theorem 2]{BS} on weakly coupled elliptic systems.
\begin{theorem}[\cite{BS}]\label{thm BS}
If $(u,v)$ is a classical solution to the following elliptic system
\begin{equation*}
\left\{\begin{array}{rll}
\Delta u + g(u,v) = 0 \quad & \textup{in} \quad & \ \R^n,\\
\Delta v + f(u,v) = 0 \quad & \textup{in} \quad & \ \R^n,\\
u, \ v > 0 \quad & \textup{in} \quad & \ \R^n,\\
u(x), \ v(x) \to 0 \quad & \textup{as} \quad & |x| \to \infty,
\end{array}\right.
\end{equation*}
where $f, g \in C^1([0, \infty) \times [0, \infty), \R)$. Suppose further that
\begin{enumerate}[label=\rm(\roman*)]
\item \label{BS sign 1} $\displaystyle \frac{\partial g}{\partial v}, \frac{\partial f}{\partial u}$ are non-negative on $[0, \infty) \times [0, \infty)$; \textup{(quasi-monotonicity)}
\item \label{BS sign 2} $\displaystyle \frac{\partial g}{\partial u}(0, 0) < 0 \text{ and } \frac{\partial f}{\partial v}(0,0) < 0$;
\item \label{BS det} $\textup{det} A > 0$, where
\[ 
A := \begin{pmatrix} \displaystyle \frac{\partial g}{\partial u} & \displaystyle \frac{\partial g}{\partial v} \\\\ \displaystyle \frac{\partial f}{\partial u} & \displaystyle \frac{\partial f}{\partial v} \end{pmatrix}(0,0).
\]
\end{enumerate}
Then there exist points $x_0, x_1 \in \R^n$ such that $u(x) = u(|x - x_0|)$ and $v(x) = v(|x - x_1|)$. Moreover
\[
\frac{du}{dr_0} < 0 \qquad \text{and} \qquad \frac{dv}{dr_1} < 0,
\]
where $r_0 := |x - x_0|$ and $r_1 := |x - x_1|$.
\end{theorem}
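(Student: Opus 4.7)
The plan is to adapt the classical Gidas--Ni--Nirenberg moving planes argument to the weakly coupled cooperative system setting. For a fixed direction $\gamma \in S^{n-1}$ and $\lambda \in \R$, introduce the hyperplane $T_\lambda = \{x : x \cdot \gamma = \lambda\}$, the reflection $x \mapsto x^\lambda$ across $T_\lambda$, and the half-space $\Sigma_\lambda = \{x : x \cdot \gamma > \lambda\}$. Set $w_1(x) = u(x^\lambda) - u(x)$ and $w_2(x) = v(x^\lambda) - v(x)$ on $\Sigma_\lambda$. By the mean value theorem and the quasi-monotonicity hypothesis \ref{BS sign 1}, the pair $(w_1, w_2)$ solves a linear elliptic system
\[
\Delta w_i + \sum_{j=1}^{2} c_{ij}(x)\, w_j = 0, \qquad i=1,2,
\]
with off-diagonal coefficients $c_{12}, c_{21} \ge 0$, so the full machinery of maximum principles for cooperative systems becomes available.

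The first step is to initiate the procedure at $\lambda = +\infty$. As $|x| \to \infty$ the coefficients $c_{ij}(x)$ tend to the entries of $A$, and hypotheses \ref{BS sign 2}--\ref{BS det} combined with the Routh--Hurwitz criterion imply that $A$ is a Hurwitz matrix; equivalently $-A$ is an M-matrix and admits a strictly positive vector $\xi = (\xi_1, \xi_2)$ with $-A\xi > 0$ componentwise. This yields a componentwise positive decaying supersolution of the form $\Phi_i(x) = \xi_i e^{-\alpha |x|}$ outside a sufficiently large ball, from which one concludes $w_i \ge 0$ on $\Sigma_\lambda$ for all $\lambda$ large enough.

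The second step is to decrease $\lambda$ and consider
\[
\lambda^* = \inf\bigl\{\lambda \in \R : w_1, w_2 \ge 0 \text{ on } \Sigma_{\lambda'} \text{ for every } \lambda' \ge \lambda\bigr\}.
\]
The strong maximum principle for cooperative systems then forces, for every $\lambda > \lambda^*$ and each $i$, that $w_i$ either vanishes identically on $\Sigma_\lambda$ or is strictly positive there. If $(w_1, w_2) \not\equiv (0,0)$ at $\lambda = \lambda^*$, a narrow-domain maximum principle applied on the slab $\Sigma_{\lambda^* - \varepsilon} \setminus \Sigma_{\lambda^* + \delta}$ (with $\varepsilon + \delta$ small) combined with the supersolution $\Phi_i$ on $\Sigma_{\lambda^* - \varepsilon} \cap \{|x| > R\}$ produces strict positivity slightly past $\lambda^*$, contradicting its definition. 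Hence $u$ and $v$ are each symmetric across $T_{\lambda^*}$. Running this argument over all directions $\gamma$ and extracting strict radial monotonicity from the Hopf lemma at every stage yields a single center $x_0$ for $u$ and $x_1$ for $v$; the two may differ because the symmetry in a given direction is obtained independently for each component.

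The principal obstacle is the establishment and deployment of a cooperative maximum principle on the unbounded half-spaces $\Sigma_\lambda$ using only the spectral data \ref{BS sign 2}--\ref{BS det}. Without a Hurwitz linearization at zero no exponentially decaying supersolution is available and the moving planes cannot be started at infinity; once such a supersolution has been constructed, the M-matrix structure near infinity together with narrow-domain arguments for scalar elliptic operators lift to the $2\times 2$ system via the quasi-monotone coupling. A more technical subtlety is showing that the symmetry hyperplanes obtained in different directions all contain a common point for each of $u$ and $v$, which follows from the strict radial monotonicity provided by the Hopf lemma at each stage of the moving planes.
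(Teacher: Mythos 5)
The paper does not prove this statement; it is quoted verbatim as Theorem 2 of the cited reference \cite{BS} (Busca--Sirakov), so there is no internal proof to compare against. Your moving-planes outline --- cooperative linear system for the reflected differences, the M-matrix/Hurwitz structure of $A$ from hypotheses (ii)--(iii) supplying a positive decaying supersolution to start the planes at infinity, the critical position $\lambda^*$ handled by narrow-domain plus far-field maximum principles, and the Hopf lemma for strict radial monotonicity --- is precisely the strategy of that reference, so this is essentially the same approach as the (cited) source.
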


From the above theorem we immediately obtain
\begin{lem}[Local monotonicity]\label{lem slow wave monotonicity}
Fix $\beta \in \R$. There exists $\lambda_0>0$ such that every solution $(u, \eta,\lambda) \in \mathscr{C}^{\textup{slow}}_{\textup{loc}}$ with $0 \le \lambda < \lambda_0$ is strictly monotone in that for $x > 0$,
\begin{equation}\label{monotonicity}
u' < 0 \qquad \text{and} \qquad \eta' > 0.
\end{equation}
\end{lem}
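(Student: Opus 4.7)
The plan is to reduce the monotonicity statement to a direct application of Theorem \ref{thm BS} for weakly coupled cooperative elliptic systems, once the correct sign conventions are installed. Since Theorem \ref{thm slow wave existence} already provides $u>0$ and $\eta<0$ along $\cm^{\textup{slow}}_{\textup{loc}}$, the natural move is to introduce $\tilde\eta := -\eta > 0$ and rewrite \eqref{eqn u}--\eqref{eqn eta} as a second-order $2\times 2$ elliptic system for the pair of positive unknowns $(u,\tilde\eta)$, decaying to zero at infinity. The principal symbol of the system is $-\beta^2[1-\lambda^2(1+\tfrac{1}{3\beta^2})]\partial_x^2$, which is uniformly elliptic for $\lambda$ below some threshold $\lambda_0$, so after dividing through we obtain equations of the form $u''+g(u,\tilde\eta)=0$ and $\tilde\eta''+f(u,\tilde\eta)=0$ exactly as in the hypothesis of Theorem \ref{thm BS}.

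The next step is to verify the three structural conditions of Theorem \ref{thm BS}. For the quasi-monotonicity condition \ref{BS sign 1}, writing out $g$ gives $\partial_{\tilde\eta}g=\tfrac{1}{A}\bigl(\tfrac{\lambda}{3\beta^2}+u\bigr)\ge 0$ on $[0,\infty)^2$ since $u>0$, while $\partial_{u}f=\tfrac{1}{A}(\lambda\tilde\eta+u)\ge 0$ as well; here $A=\beta^{2}[1-\lambda^{2}(1+\tfrac{1}{3\beta^{2}})]>0$ for $\lambda<\lambda_0$. For \ref{BS sign 2}, evaluating at $(0,0)$ yields $\partial_{u}g(0,0)=-\tfrac{1}{A}(1-\lambda^{2}(1+\tfrac{1}{3\beta^{2}}))<0$ and $\partial_{\tilde\eta}f(0,0)=-\tfrac{1}{A}(1-\lambda^{2})<0$, both for $\lambda_0$ chosen so that the relevant factors are positive. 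Finally, because $\partial_{u}f(0,0)=0$, the Jacobian is upper-triangular at the origin and its determinant is $\tfrac{1}{A^{2}}(1-\lambda^{2}(1+\tfrac{1}{3\beta^{2}}))(1-\lambda^{2})>0$, giving \ref{BS det}.

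Invoking Theorem \ref{thm BS} then yields points $x_{0},x_{1}\in\R$ such that $u(\cdot)$ and $\tilde\eta(\cdot)$ are radially symmetric about $x_{0}$ and $x_{1}$ respectively and strictly decreasing in their radial variables. Since the solutions in $\cm^{\textup{slow}}_{\textup{loc}}$ lie in $H^{2}_{\even}(\R)\times H^{2}_{\even}(\R)$ by construction, the centers of symmetry must coincide with the origin, i.e. $x_{0}=x_{1}=0$. The radial monotonicity then translates directly into $u'(x)<0$ and $\tilde\eta'(x)<0$ for $x>0$, which is exactly \eqref{monotonicity} after undoing the substitution $\eta=-\tilde\eta$. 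The only real obstacle here is bookkeeping: one must pick $\lambda_0$ small enough simultaneously to ensure ellipticity, the strict sign of the diagonal derivatives at $(0,0)$, and the positivity of the Jacobian determinant, but all three reduce to the single smallness condition $\lambda^{2}(1+\tfrac{1}{3\beta^{2}})<1$, which is clearly achievable uniformly in $\beta$.
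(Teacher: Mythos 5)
Your proposal is correct and follows essentially the same route as the paper: substitute $v=-\eta$ to make the system cooperative, verify hypotheses \ref{BS sign 1}--\ref{BS det} of Theorem \ref{thm BS} using the positivity $u>0$, $v>0$ from Theorem \ref{thm slow wave existence} and the smallness of $\lambda$ (your computed Jacobian entries agree with the paper's), and conclude radial monotonicity about the center of symmetry. The only difference is cosmetic: you add the (correct) remark that evenness forces the symmetry centers to be the origin, a step the paper leaves implicit.
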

\begin{proof}
We see that $(u, \eta)$ satisfies equations \eqref{eqn u} and \eqref{eqn eta}. Setting $v := -\eta$ and putting it into the form as in Theorem \ref{thm BS} we find that 
\begin{align*}
g(u, v) & = -\frac{1}{\beta^2} u + \frac{\lambda}{3 \beta^4 B} v + \frac{1}{\beta^2 B} uv - \frac{\lambda}{2 \beta^2 B} \LC 1 + \frac{1}{3\beta^2} \RC u^2, \\
f(u, v) & = -\frac{1 - \lambda^2}{\beta^2 B} v + \frac{\lambda}{\beta^2 B} uv + \frac{1}{2\beta^2 B} u^2,
\end{align*}
where $B := \LB 1 - \lambda^2 \LC 1 + \frac{1}{3\beta^2} \RC \RB > 0$ for small $\lambda$. Direct computation shows that
\begin{equation*}
\begin{pmatrix} \displaystyle \frac{\partial g}{\partial u} & \displaystyle \frac{\partial g}{\partial v} \\\\ \displaystyle \frac{\partial f}{\partial u} & \displaystyle \frac{\partial f}{\partial v} \end{pmatrix}
= 
\begin{pmatrix} \displaystyle -\frac{1}{\beta^2} + \frac{1}{\beta^2 B} v - \frac{\lambda}{\beta^2 B} \LC 1 + \frac{1}{3\beta^2} \RC u & \displaystyle \frac{\lambda}{3 \beta^4 B} + \frac{1}{\beta^2 B} u \\\\ \displaystyle \frac{\lambda}{\beta^2 B} v + \frac{1}{\beta^2 B} u & \displaystyle -\frac{1 - \lambda^2}{\beta^2 B} + \frac{\lambda}{\beta^2 B} u \end{pmatrix}.
\end{equation*}
From Theorem \ref{thm slow wave existence} we know that $u, v > 0$ when $\lambda$ is small, which implies that \ref{BS sign 1}--\ref{BS det} of Theorem \ref{thm BS} are satisfied. Therefore \eqref{monotonicity} holds. 
\end{proof}

Another application of Theorem \ref{thm BS} to the local solution near the bifurcation point $(u_0^+, \eta_0, 0)$ is the following result on the local uniqueness of the solution curve $\mathscr{C}^{\textup{slow}}_{\textup{loc}}$. In particular this result shows that all classical solutions near $(u^+_0, \eta_0, 0)$ with $\lambda > 0$ must be even and monotone on the positive axis. 
\begin{corollary}[Local uniqueness]\label{cor uniqueness}
Denote by $\mathcal{B}_r$ the ball of radius $r>0$ in $\left( C^{2}(\R) \cap C_0(\R) \right) \times \left( C^{2}(\R) \cap C_0(\R) \right) \times \R$ centered at $(u^+_0, \eta_0, 0)$. There exists $\varepsilon > 0$ such that for $\lambda > 0$,
\begin{equation}\label{unique general}
\mathscr{F}^{-1}(0) \cap \mathcal{B}_\varepsilon = \mathscr{C}^{\textup{slow}}_{\textup{loc}} \cap \mathcal{B}_\varepsilon.
\end{equation}
\end{corollary}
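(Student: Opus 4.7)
The plan is to combine the Busca--Sirakov symmetry theorem (Theorem \ref{thm BS}) with the Implicit Function Theorem uniqueness on the even subspace that was already used in the proof of Theorem \ref{thm slow wave existence}. Given any $(u,\eta,\lambda) \in \mathscr F^{-1}(0) \cap \mathcal B_\varepsilon$ with $\lambda > 0$ and $\varepsilon$ small, I would first reproduce the sign conclusions $u > 0$ and $\eta < 0$ on $\R$. This is possible because $H^2$ closeness to $(u_0^+, \eta_0)$ gives $C^1$ closeness through Sobolev embedding on $\R$, so the maximum-principle arguments applied to \eqref{eqn eta} and the rewritten equation \eqref{rewrite eqn u} in the proof of Theorem \ref{thm slow wave existence} go through verbatim once $\varepsilon$ and $\lambda_0$ are taken small enough.

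Setting $v := -\eta > 0$, the computation already performed in the proof of Lemma \ref{lem slow wave monotonicity} verifies hypotheses (i)--(iii) of Theorem \ref{thm BS} for $(u,v)$ when $\lambda$ is small. Theorem \ref{thm BS} then produces points $x_0, x_1 \in \R$ such that $u$ is even and strictly decreasing about $x_0$ and $\eta$ is even and strictly decreasing about $x_1$. To show $x_0 = x_1$ I would exploit the coupling: because $u > 0$ everywhere, the second equation of \eqref{slow system} determines $\eta$ as an explicit rational expression in $u$, $u'$, $u''$, and $\lambda$, which is automatically even about $x_0$ once $u$ is; since $\eta$ is also even about $x_1$ and non-constant (it decays at infinity), the two centers must coincide.

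With a common symmetry center $x_0$, the translated pair $(\tilde u, \tilde\eta)(x) := (u(x+x_0), \eta(x+x_0))$ lies in $H^2_\even(\R) \times H^2_\even(\R)$ and still sits in a small $H^2$-neighborhood of $(u_0^+, \eta_0)$: any non-small $x_0$ would contradict the $H^2$ smallness of $(u,\eta) - (u_0^+,\eta_0)$ because $(u_0^+,\eta_0)$ is centered at the origin and has a unique strict maximum/minimum there. The uniqueness branch of the IFT applied on the even subspace, which is precisely the one established in the proof of Theorem \ref{thm slow wave existence}, then forces $(\tilde u, \tilde\eta, \lambda)$ to be the unique point of $\mathscr C^{\textup{slow}}_{\textup{loc}}$ at parameter $\lambda$, yielding \eqref{unique general} up to the translation invariance inherent in the problem. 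The hard part is the identification $x_0 = x_1$: Theorem \ref{thm BS} delivers the two centers independently, and one genuinely needs the nonlinear coupling of the $abcd$ system to collapse them into a single symmetry point.
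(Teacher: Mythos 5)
Your proposal follows essentially the same route as the paper: maximum-principle arguments on half-lines past the first sign change to recover $u>0$ and $\eta<0$ from the $H^2$-closeness to $(u_0^+,\eta_0)$, then Theorem \ref{thm BS} for evenness and monotonicity, then the Implicit Function Theorem uniqueness on $H^2_\even(\R)\times H^2_\even(\R)$ already established in Theorem \ref{thm slow wave existence}. The only quibble is your claim that the second equation of \eqref{slow system} gives $\eta$ as a pointwise rational expression in $u,u',u''$ --- that equation contains $\eta''$, so you should instead write $\eta = \lambda u - \mathcal L^{-1}\bigl(\tfrac12 u^2\bigr)$ and use that the Green's function of $\mathcal L$ is even to transfer the symmetry center of $u$ to $\eta$; with that fix your identification of the two Busca--Sirakov centers (a point the paper's own proof leaves implicit) goes through.
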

\begin{proof}
Consider a solution $(u, \eta, \lambda)$ to equations \eqref{eqn eta}--\eqref{eqn u} with
\[
\left\| u - u_0^+ \right\|_{C^2(\R)} + \left\| \eta - \eta_0 \right\|_{C^2(\R)} + |\lambda| < \varepsilon.
\]
There exists an $R_0>0$ large enough such that
\be\label{cond R0 u eta unique}
\begin{split}
& \left\| u - u_0^+ \right\|_{C^2(\R)} + \left\| \eta - \eta_0 \right\|_{C^2(\R)} + |\lambda| + \left\| u_0^+ \|_{C^0(|x| \ge R_0) } + \| \eta_0 \right\|_{C^0(|x| \ge R_0) } < \varepsilon, \\
& u > 0, \quad \eta<0, \quad u' < 0, \quad \eta' > 0 \qquad \text{for} \quad |x| < R_0.
\end{split}
\ee
Hence if $\sup  \eta > 0$, then from continuity there exists $x_0 := \min\{ x > 0: \ \eta(x) = 0 \}$ such that $\eta(x_0) = 0$ and $x_0 > R_0$. From \eqref{cond R0 u eta unique} we see that 
\[
1 - \lambda^2 - \lambda u > 0 \qquad \text{on } \ [x_0, +\infty).
\]
Applying the maximum principle to \eqref{eqn eta} on $[x_0, +\infty)$ yields that $\eta \le 0$ on $[x_0, +\infty)$. Together with \eqref{cond R0 u eta unique}, this fact contradicts the assumption that $\sup \eta > 0$. Therefore we must have $\eta \le 0$.

In a similar way if $\inf u < 0$, then we may find $x_1 := \min\{ x > 0: \ u(x) = 0 \}$ such that $u(x_0) = 0$ and $x_0 > R_0$. 
The maximum principle applied to \eqref{rewrite eqn u} on $[x_1, +\infty)$ leads to the conclusion that $u \ge 0$, contradicting to the assumption that $\inf u < 0$. Thus $u \ge 0$. 

If there exists some $x_0 \ge 0$ such that $\eta(x_0) = 0$, then $\eta(x_0) = \sup  \eta$, and hence $\eta'(x_0) = 0$ and $\eta''(x_0) \le 0$. From \eqref{eqn eta} we find that $u(x_0) = 0$. This also means that $u(x_0) = \inf u$, and so $u'(x_0) = 0$. Uniqueness of the ODE then implies that $\eta = u \equiv 0$, which contradicts \eqref{cond R0 u eta unique}. The same argument applies to the situation if $u$ touches zero at some finite point. 

The above argument indicates that for any small $(u, \eta, \lambda) \in \mathscr{F}^{-1}(0) \cap \mathcal{B}_\varepsilon$,
\[
u > 0 \qquad \text{and} \qquad \eta < 0.
\]
Then for $\lambda > 0$ one may apply Theorem \ref{thm BS} to conclude that $u$ and $\eta$ are both even. Therefore the uniqueness of $\mathscr{C}^{\textup{slow}}_{\textup{loc}}$ within $\mathscr{F}^{-1}(0) \cap \LC  \Xspace\times \R^+ \RC$ gives \eqref{unique general}.
\end{proof}

\subsection{Nodal pattern}\label{subsec nodal}

Now for each fixed $\beta \in \R$ we introduce the set
\begin{equation}\label{set O}
\mathcal{O} := \left\{ (u, \eta, \lambda) \in \Xspace \times \R^+: \ 1 - \lambda^2 \LC 1 + \frac{1}{3\beta^2} \RC > 0 \right\}.
\end{equation}

The results of Theorem \ref{thm slow wave existence} and Lemma \ref{lem slow wave monotonicity} naturally suggest us to consider the following ``nodal properties"
\begin{subequations}\label{nodal}
\begin{alignat}{2}
u & > 0  &\qquad& \textrm{in }\ \R,  \label{nodal u}\\
\eta & < 0  && \textrm{in } \ \R, \label{nodal eta} \\
u' & < 0 && \textrm{in }\ \R^+, \label{nodal u prime}\\
\eta' & > 0 && \textrm{in }\ \R^+. \label{nodal eta prime}
\end{alignat}
\end{subequations}

\begin{lem}[Open property]\label{lem open}
Let $(u_*, \eta_*, \lambda_*) \in \mathcal{O} \cap \mathscr{F}^{-1}(0)$ be given and suppose that it satisfies \eqref{nodal}. There exists $\varepsilon = \varepsilon(u_*, \eta_*, \lambda_*) > 0$ such that, if $(u, \eta, \lambda) \in \mathcal{O} \cap \mathscr{F}^{-1}(0)$ and
\begin{equation}\label{open ball}
\left\| u - u_* \right\|_{C^2(\R)} + \left\| \eta - \eta_* \right\|_{C^2(\R)} + |\lambda - \lambda_*| < \varepsilon,
\end{equation}
then $(u, \eta, \lambda)$ also satisfies \eqref{nodal}.
\end{lem}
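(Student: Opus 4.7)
The plan is to reduce the full set of nodal conditions \eqref{nodal} to just the pointwise sign conditions \eqref{nodal u}--\eqref{nodal eta}, since the strict monotonicity \eqref{nodal u prime}--\eqref{nodal eta prime} then follows automatically from Theorem~\ref{thm BS}. The verification carried out in the proof of Lemma~\ref{lem slow wave monotonicity} shows that hypotheses (i)--(iii) of Theorem~\ref{thm BS} hold for every $(u, \eta, \lambda) \in \mathcal{O}$ with $u > 0$ and $-\eta > 0$: conditions (ii)--(iii) reduce to $\lambda^2 < 1$, which is built into membership in $\mathcal{O}$, while (i) holds automatically whenever $u$ and $-\eta$ are non-negative. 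Theorem~\ref{thm BS} then furnishes centers $x_0, x_1 \in \R$ of radial symmetry with strict radial decrease for $u$ and $-\eta$; the evenness guaranteed by $(u, \eta) \in H^2_\even \times H^2_\even$ forces $x_0 = x_1 = 0$, which is exactly \eqref{nodal u prime}--\eqref{nodal eta prime}.

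It therefore suffices to establish \eqref{nodal u}--\eqref{nodal eta} for the perturbation. The Sobolev embedding $H^2(\R) \hookrightarrow C^0_b(\R)$ applied to \eqref{open ball} yields uniform $C^0$-closeness of $(u, \eta)$ to $(u_*, \eta_*)$. I fix $R > 0$ large enough that $\|u_*\|_{L^\infty([R, \infty))}$ and $\|\eta_*\|_{L^\infty([R, \infty))}$ are as small as needed. On the compact interval $[-R, R]$, continuity gives strict uniform bounds $u_* \geq c_0$ and $\eta_* \leq -c_0$ for some $c_0 > 0$, and these are directly inherited by $u, \eta$ via $C^0$-closeness once $\varepsilon$ is small. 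On the tail $[R, \infty)$, I rewrite \eqref{eqn u}--\eqref{eqn eta} in the form
\[
-\beta^2 B u'' + c_u(x)\,u = -\tfrac{\lambda}{3\beta^2}\,\eta, \qquad -\beta^2 B \eta'' + c_\eta(x)\,\eta = -\tfrac12 u^2,
\]
with $c_u = B + \tfrac{\lambda}{2}\bigl(1 + \tfrac{1}{3\beta^2}\bigr) u + \eta$ and $c_\eta = 1 - \lambda^2 - \lambda u$ both positive on $[R,\infty)$ thanks to $B > 0$ and $\lambda^2 < 1$ from $\mathcal{O}$ combined with the tail smallness of $u$ and $\eta$.

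Two successive applications of the maximum principle close the argument. First, the $\eta$-equation has right-hand side $-u^2/2 \leq 0$ irrespective of the sign of $u$; combined with $c_\eta > 0$, the boundary datum $\eta(R) < 0$ coming from the compact analysis, and $\eta \to 0$ at infinity, one obtains $\eta < 0$ on $[R,\infty)$. Second, with $\eta < 0$ now in hand, the $u$-equation has non-negative right-hand side $-\tfrac{\lambda}{3\beta^2}\eta \geq 0$; together with $c_u > 0$, $u(R) > 0$, and $u \to 0$, this yields $u > 0$ on $[R,\infty)$. The corresponding statements on $(-\infty, -R]$ follow by evenness. I expect the only real subtlety to be the apparent coupling of the two equations on the tail, but this is resolved by the ordering above: because $-u^2/2$ is sign-definite independently of $u$, one can handle $\eta$ first and then feed the resulting sign into the $u$-equation without circularity.
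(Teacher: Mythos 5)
Your proposal is correct and follows essentially the same route as the paper: the paper's proof simply invokes the tail/maximum-principle argument of Corollary \ref{cor uniqueness} (with $(u_0^+,\eta_0,0)$ replaced by $(u_*,\eta_*,\lambda_*)$) for the sign conditions and then Theorem \ref{thm BS} for the monotonicity, which is exactly your two-step structure, including the observation that the $\eta$-equation can be treated first because its forcing $-u^2/2$ is sign-definite. The only detail you elide is the promotion of $\eta\le 0$, $u\ge 0$ to strict inequalities at a possible interior touching point (where the forcing $-u^2/2$ may vanish); the paper handles this, as in Corollary \ref{cor uniqueness}, by noting that a zero of $\eta$ forces $u$, $u'$, $\eta'$ to vanish there as well and then invoking ODE uniqueness.
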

\begin{proof}
The proof of \eqref{nodal u} and \eqref{nodal eta} follows the same argument as in the proof of Corollary \ref{cor uniqueness} by replacing $(u_0^+, \eta_0, 0)$ with $(u_*, \eta_*, \lambda_*)$. 
%
%
%
The proof for \eqref{nodal u prime}--\eqref{nodal eta prime} then follows directly from the application of Theorem \ref{thm BS}.
\end{proof}

\begin{lem}[Closed property]\label{lem closed}
Let $\{(u_n, \eta_n, \lambda_n)\} \subset \mathcal{O} \cap \mathscr{F}^{-1}(0)$ be given and suppose that $(u_n, \eta_n, \lambda_n) \to (u, \eta, \lambda) \in \mathcal{O}\cap \mathscr{F}^{-1}(0)$ in $C^2(\R) \times C^2(\R) \times \R$. If each $(u_n, \eta_n, \lambda_n)$ satisfies \eqref{nodal}, then $(u, \eta, \lambda)$ also satisfies \eqref{nodal} unless $u = \eta \equiv 0$.
\end{lem}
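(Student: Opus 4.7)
The plan is to first pass the strict inequalities \eqref{nodal} to their non-strict analogues along the convergent sequence via the embedding $H^2(\R) \hookrightarrow C^1(\R)$, then recover strictness through a maximum-principle/ODE-uniqueness dichotomy, and finally re-derive strict monotonicity by another appeal to Theorem \ref{thm BS}.

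First I would use uniform $C^1$ convergence from $H^2$ convergence to conclude that $u_n > 0$, $\eta_n < 0$ on $\R$ and $u_n' < 0$, $\eta_n' > 0$ on $\R^+$ pass to the weak versions $u \geq 0$, $\eta \leq 0$ on $\R$ and $u' \leq 0$, $\eta' \geq 0$ on $\R^+$. This is immediate and comes for free from the hypothesis.

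The crux will be the dichotomy: either $(u,\eta) \equiv (0,0)$, or else $u > 0$ and $\eta < 0$ hold strictly. Suppose $u(x_0) = 0$ at some $x_0 \in \R$. Then $x_0$ is a global minimum of $u$, so $u'(x_0) = 0$ and $u''(x_0) \geq 0$. Evaluating \eqref{eqn u} at $x_0$ (all terms multiplied by $u$ drop out) gives
\[
u''(x_0) = \frac{\lambda \, \eta(x_0)}{3\beta^4 B} \leq 0,
\]
so $u''(x_0) = 0$. When $\lambda > 0$, this forces $\eta(x_0) = 0$; since $\eta \leq 0$, $x_0$ is also a maximum of $\eta$, so $\eta'(x_0) = 0$, and ODE uniqueness for the coupled system \eqref{slow system} with zero Cauchy data at $x_0$ yields $u \equiv \eta \equiv 0$. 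When $\lambda = 0$, equation \eqref{eqn u} degenerates into a linear scalar second order ODE for $u$ with $\eta$ merely as a coefficient; the data $u(x_0) = u'(x_0) = 0$ then force $u \equiv 0$ by scalar uniqueness, after which the second component of \eqref{slow system} reduces to $\mathcal L \eta = 0$, whose only $H^2(\R)$ solution is $\eta \equiv 0$. An entirely parallel argument starting from $\eta(x_0) = 0$ and \eqref{eqn eta} handles the case where $\eta$ touches zero somewhere.

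Finally, in the nontrivial alternative $u > 0$, $\eta < 0$, I would apply Theorem \ref{thm BS} to $(u, -\eta)$ precisely as in the proof of Lemma \ref{lem slow wave monotonicity}: the defining constraint of $\mathcal O$ forces $\lambda < 1$, which in turn secures condition \ref{BS sign 2} and $\det A > 0$ in \ref{BS det}, while the cooperativity condition \ref{BS sign 1} holds for all $\lambda \geq 0$. Thus $u$ and $-\eta$ are radially strictly decreasing about their respective centers, and evenness pins both centers at the origin, recovering $u'(x) < 0$ and $\eta'(x) > 0$ for every $x > 0$. The main obstacle is the $\lambda = 0$ branch of the dichotomy above, since the coupled maximum-principle argument no longer forces $\eta(x_0) = 0$ when the $\lambda$-prefactor vanishes; there the resolution is to pivot from the coupled viewpoint to the scalar linear ODE structure that $u$ inherits at $\lambda = 0$, then close the argument on $\eta$ by directly inverting $\mathcal L$ on $H^2(\R)$.
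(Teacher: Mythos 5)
Your proof is correct and follows essentially the same route as the paper's: pass to the non-strict inequalities by $H^2\hookrightarrow C^1$ convergence, then use the maximum principle in \eqref{eqn u} (and \eqref{eqn eta}) at a touching point together with ODE uniqueness to force the trivial solution, and reapply Theorem \ref{thm BS} for strict monotonicity. You in fact fill in details the paper leaves implicit (the $\eta$-touching case, the degenerate $\lambda=0$ branch, and the explicit re-derivation of $u'<0$, $\eta'>0$), all of which check out.
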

\begin{proof}
First we see that 
\begin{equation*}
\begin{split}
& u \ge 0, \qquad \eta \le 0, \qquad \lambda \ge 0, \qquad \text{and}\\
& u' \le 0, \qquad \eta' \ge 0 \qquad \text{in } \ \R^+.
\end{split}
\end{equation*}
If there exists $x_0$ such that $u(x_0) = 0$, then $u(x_0) = \inf u$, and hence $u'(x_0) = 0$. 
From the equation \eqref{eqn u} and maximum principle we see that $\eta(x_0) = 0$. Therefore $\eta(x_0) = \sup  \eta$. So $\eta'(x_0) = 0$. Thus from the uniqueness of ODE we know that $u = \eta \equiv 0$.
\end{proof}

\begin{lem}[Nodal property]\label{lem noda}
If $\mathcal{K}$ is any connected subset of $\mathcal{O} \cap \mathscr{F}^{-1}(0)$ that contains $\mathscr{C}^{\textup{slow}}_{\textup{loc}}$, then every $(u, \eta, \lambda) \in \mathcal{K}$ exhibits \eqref{nodal}.
\end{lem}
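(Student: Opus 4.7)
The plan is a standard open--closed--connectedness argument inside $\mathcal K$. Introduce
\[
\mathcal N := \{(u,\eta,\lambda) \in \mathcal K : (u,\eta,\lambda) \text{ satisfies } \eqref{nodal}\}.
\]
I will show that $\mathcal N$ is nonempty, relatively open in $\mathcal K$, and relatively closed in $\mathcal K$; connectedness of $\mathcal K$ will then force $\mathcal N = \mathcal K$, which is the claim.

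Nonemptiness is immediate: the hypothesis $\mathscr C^{\textup{slow}}_{\textup{loc}} \subset \mathcal K$, together with \eqref{local sign fast u} in Theorem~\ref{thm slow wave existence} and the strict monotonicity in Lemma~\ref{lem slow wave monotonicity}, puts every element of $\mathscr C^{\textup{slow}}_{\textup{loc}}$ into $\mathcal N$. Openness of $\mathcal N$ in $\mathcal K$ is the direct content of Lemma~\ref{lem open}: each $(u_*,\eta_*,\lambda_*) \in \mathcal N$ admits an $H^2(\R) \times H^2(\R) \times \R$-ball on which every element of $\mathcal O \cap \mathscr F^{-1}(0)$---and hence every element of $\mathcal K$---still satisfies \eqref{nodal}.

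The step I expect to be the main obstacle is closedness. Let $(u_n,\eta_n,\lambda_n) \in \mathcal N$ converge to $(u_*,\eta_*,\lambda_*) \in \mathcal K$ in $H^2 \times H^2 \times \R$. Lemma~\ref{lem closed} yields a dichotomy: either the limit satisfies \eqref{nodal}, in which case $(u_*,\eta_*,\lambda_*) \in \mathcal N$, or $u_* \equiv \eta_* \equiv 0$. To rule out the trivial alternative, I would compute the Fourier symbol of $\mathscr F_U(0,0,\lambda_*)$; its determinant equals
\[
m(\xi)\Bigl(m(\xi)\bigl[1 - \lambda_*^2\bigl(1 + \tfrac{1}{3\beta^2}\bigr)\bigr] + \tfrac{\lambda_*^2}{3\beta^2}\Bigr), \qquad m(\xi) := 1 + \beta^2 \xi^2,
\]
and the definition of $\mathcal O$ forces $1 - \lambda_*^2(1 + \tfrac{1}{3\beta^2}) > 0$, so this determinant is bounded below by a positive multiple of $m(\xi)^2$ uniformly in $\xi$. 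Hence $\mathscr F_U(0,0,\lambda_*)$ is an isomorphism from $H^2_{\even}(\R) \times H^2_{\even}(\R)$ onto $L^2_{\even}(\R) \times L^2_{\even}(\R)$, and the Implicit Function Theorem then says that the only solutions of $\mathscr F(U,\lambda) = 0$ near $(0,0,\lambda_*)$ are the trivial ones $(0,0,\lambda)$. Since $u_n > 0$ and $\eta_n < 0$ for every $n$, this contradicts $(u_n,\eta_n,\lambda_n) \to (0,0,\lambda_*)$ for $n$ large, so the trivial alternative is impossible and $(u_*,\eta_*,\lambda_*) \in \mathcal N$.

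With $\mathcal N$ nonempty, relatively open, and relatively closed in the connected set $\mathcal K$, I conclude $\mathcal N = \mathcal K$, completing the proof.
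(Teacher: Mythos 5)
Your proof is correct and follows essentially the same open--closed--connectedness strategy as the paper, relying on Lemma \ref{lem open} for openness and Lemma \ref{lem closed} for closedness along $\mathcal K$. The only substantive difference is that you dispatch the exceptional case $u = \eta \equiv 0$ of Lemma \ref{lem closed} by an explicit (and correct) Fourier-symbol computation showing that $\mathscr F_U(0,0,\lambda_*)$ is invertible for $(0,0,\lambda_*)$ satisfying the ellipticity constraint defining $\mathcal O$, whereas the paper leaves this point implicit and instead invokes the local uniqueness of Corollary \ref{cor uniqueness} near the bifurcation point.
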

\begin{proof}
First note that each $(u(\lambda),\eta(\lambda),\lambda) \in \mathscr{C}^{\textup{slow}}_{\textup{loc}}$ satisfies \eqref{nodal}. Recall the definition of $\mathcal{B}_r$ in Corollary \ref{cor uniqueness}. Fix $0 < \lambda < \lambda_0$ and take $\varepsilon$ to be sufficiently small, the local uniqueness of $\mathscr{C}^{\textup{slow}}_{\textup{loc}}$ implies that
\[
\mathcal{K} \cap \mathcal{B}_\varepsilon = \mathscr{C}^{\textup{slow}}_{\textup{loc}} \cap \mathcal{B}_\varepsilon,
\]
and $\mathcal{K} \backslash \mathcal{B}_\varepsilon$ is the connected component containing $(u(\lambda),\eta(\lambda),\lambda)$. Applying Lemmas \ref{lem open} and \ref{lem closed} completes the proof. 
\end{proof}

\subsection{Monotone fronts}\label{subsec mf}
Next we define the concept of {\it monotone fronts}. 
\begin{definition}\label{def front}
For $\lambda > 0$ and $\lambda^2 \LC 1 + \frac{1}{3\beta^2} \RC < 1$, we say $(u, \eta, \lambda)$ is a monotone front solution of \eqref{slow system} if $(u, \eta) \in C^2_\bdd(\R) \times C^2_\bdd(\R)$, and
\begin{equation}\label{slow front}
\lim_{x\to +\infty}(u(x), \eta(x)) = (0, 0), \quad \text{and} \quad u > 0, \quad \eta < 0, \quad u' \le 0, \quad \eta' \ge 0 \quad \text{in }\ \R,
\end{equation}
where $C^2_\bdd(\R)$ is the set of $C^2$ functions with bounded norms. 
\end{definition}
\begin{lem}[Nonexistence of monotone fronts]\label{lem no fronts}
There exists some $\beta_0 > 0$ such that if $\lambda > 0$ and $|\beta| < \beta_0$, then system \eqref{slow system} does not admit any monotone front solution in the sense of \eqref{slow front}.
\end{lem}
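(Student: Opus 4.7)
The plan is to argue by contradiction via an integrated identity, converting the monotonicity of the front into a sharp sign obstruction. Suppose a monotone front $(u,\eta,\lambda)$ exists.

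\textbf{Asymptotic state at $-\infty$.} Since $u$ and $\eta$ are bounded and monotone, the limits $(u^*,\eta^*) := \lim_{x\to-\infty}(u,\eta)$ exist. Because $(u,\eta)\in C^2_\bdd(\R)$, the derivatives $u',\eta'$ are uniformly continuous and lie in $L^1(\R)$ (with $\int u'\,dx=-u^*$ and $\int\eta'\,dx=-\eta^*$), so a Barbalat-type argument forces $u',\eta'\to 0$ as $x\to-\infty$. Substituting into the two equations and inverting the resulting $2\times 2$ linear system for $(u'',\eta'')$ then yields $u'',\eta''\to 0$ as well. Passing to the limit in \eqref{solieqn} with the parameters \eqref{parameters standing} produces the algebraic conditions
\[
\eta^* - \lambda u^* + \tfrac{1}{2}(u^*)^2 = 0,\qquad u^*(1+\eta^*) = \lambda\eta^*,
\]
and combining these with $u^*>0$ and $\lambda\in(0,1)$ (the ellipticity $\lambda^2(1+\tfrac{1}{3\beta^2})<1$ together with $\beta^2<0.26$ forces $\lambda^2<\tfrac{3\beta^2}{1+3\beta^2}<1$) pins down
\[
u^* = \tfrac{1}{2}\bigl(3\lambda+\sqrt{\lambda^2+8}\bigr),\qquad \eta^* = \lambda u^* - \tfrac{1}{2}(u^*)^2 \;<\; -1.
\]

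\textbf{A clean integral identity.} Next I will multiply the first line of \eqref{solieqn} by $u'$ and the second by $\eta'$, add them, and integrate on $\R$. The boundary terms $[(u')^2]_{-\infty}^{+\infty}$ and $[(\eta')^2]_{-\infty}^{+\infty}$ vanish by the previous step; after integration by parts the two occurrences of $\pm\beta^2\int u''\eta'\,dx$ cancel, as do the two copies of $\int u\eta'\,dx$ produced by the cross-couplings. Every term depending on $\beta$ or on $b=\tfrac{1}{3}+\beta^2$ thereby drops out, leaving
\[
\frac{1}{2}\int_{\R} u'(x)\,\eta(x)^2\,dx \;=\; -u^*\eta^* + \frac{\lambda}{2}\bigl[(u^*)^2+(\eta^*)^2\bigr] - \frac{(u^*)^3}{6} - \frac{u^*(\eta^*)^2}{2} \;=:\; E(\lambda).
\]

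\textbf{Extracting the contradiction.} Since $u'\le 0$ the left-hand side is $\le 0$. On the other hand, substituting $\eta^* = \lambda u^* - \tfrac{1}{2}(u^*)^2$ and using $(u^*)^2 = 3\lambda u^* + 2(1-\lambda^2)$ to collapse all higher powers of $u^*$, the expression $E(\lambda)$ reduces to the closed form
\[
E(\lambda) \;=\; \frac{(7\lambda^2+2)\,u^*}{12} + \frac{\lambda(1-\lambda^2)}{2},
\]
both of whose summands are strictly positive for $\lambda\in(0,1)$. The ensuing strict inequality $0<E(\lambda)\le 0$ is the required contradiction.

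The main obstacle will be the bookkeeping in the integration-by-parts step: the boundary contributions at $-\infty$ do not vanish and must be expressed solely through $u^*$ and $\eta^*$, and the delicate cancellations among the $\beta$- and $b$-dependent cross couplings need to be tracked with care. Once the identity $\tfrac{1}{2}\int u'\eta^2\,dx = E(\lambda)$ is established, the remaining argument is essentially elementary; the role of the hypothesis $\beta^2<0.26$ is to confine $\lambda$ to $(0,1)$, on which both pieces of $E(\lambda)$ are manifestly positive.
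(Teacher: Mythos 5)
Your proof is correct, and it takes a genuinely different --- and in fact sharper --- route than the paper's. The paper first decouples the system into the single second-order equations \eqref{eqn eta} and \eqref{eqn u}, reads the upstream state off those (obtaining the $\beta$-dependent quadratic \eqref{front root}), then pairs \eqref{eqn eta} with $\eta'$ and \eqref{eqn u} with $u'$; the non-exact terms there only give a one-sided bound on $\bar u$, and the contradiction comes from a polynomial inequality $G(\lambda^2,t)\ge 0$ that can be falsified only when $\beta^2<0.26$. You instead work directly with \eqref{solieqn}: the upstream state solves the $\beta$-independent pair $\eta^*-\lambda u^*+\tfrac12(u^*)^2=0$, $u^*(1+\eta^*)=\lambda\eta^*$, which pins down $u^*=\tfrac12\bigl(3\lambda+\sqrt{\lambda^2+8}\bigr)$ for $\lambda\in(0,1)$, and in your pairing (first equation of \eqref{solieqn} against $u'$, second against $\eta'$) every $\beta$- and $b$-dependent term is an exact derivative of a quantity vanishing at both ends, leaving the single defect $\tfrac12\int u'\eta^2\,dx\le 0$. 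Your $E(\lambda)$ in fact collapses to $\tfrac1{12}(u^*)^3>0$ (your closed form is this, via $(u^*)^3=(7\lambda^2+2)u^*+6\lambda(1-\lambda^2)$), so the contradiction is immediate and holds for \emph{every} $\beta$ compatible with the ellipticity constraint in Definition \ref{def front}; the hypothesis $\beta^2<0.26$ is not needed in your argument. One point you should flag explicitly: your limiting relations are incompatible with the paper's \eqref{relation eta u} (they force $\bar u>\tfrac{1-\lambda^2}{\lambda}$ rather than $<$). I have checked that your version is the one consistent with \eqref{solieqn}; the discrepancy traces to the sign of the $\lambda u\eta$ term in \eqref{eqn eta}, so the difference is not a gap on your side but a genuine divergence from the paper's intermediate formulas, and your route is both simpler and stronger.
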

\begin{proof}
Suppose $(u, \eta)$ is a monotone front solution to \eqref{slow system}. Then since $u, \eta$ are bounded and monotone, 
\[
(\bar u, \bar\eta) := \lim_{x\to -\infty} (u(x), \eta(x))
\]
exists, and $\bar u > 0$, $\bar\eta < 0$. Evaluating \eqref{eqn eta} at $-\infty$ leads to
\be\label{relation eta u}
\bar u < \frac{1 - \lambda^2}{\lambda}, \qquad \text{and} \quad \bar\eta = -\frac{\bar u^2}{2(1 - \lambda^2 - \lambda \bar u)}.
\ee
Substituting the above equation into \eqref{eqn u} and evaluating the equation at $-\infty$ yields
\begin{equation*}
-\LC 2 - B \RC {\bar u}^2 - \lambda B \bar u + 2(1 - \lambda^2) B = 0,
\end{equation*}
where $B = 1 - \lambda^2 \LC 1 + \frac{1}{3\beta^2} \RC \in (0, 1)$. Solving this quadratic equation  together with the constraint that $\bar u > 0$ yields
\begin{equation}\label{front root}
\bar u = \frac{\sqrt{\lambda^2 B^2 + 8B(2 - B)(1 - \lambda^2)} - \lambda B}{2(2 - B)}.
\end{equation}

On the other hand, multiplying \eqref{eqn eta} by $\eta'$ and multiplying \eqref{eqn u} by $u'$ and summing up, it follows that
\begin{equation}\label{int eqn}
\begin{split}
\LB -\frac{\beta^2 B}{2}\LC (u')^2 + (\eta')^2 \RC + \right. & \left. \frac{1 - \lambda^2}{2} \eta^2 + \frac{B}{2} u^2 + \frac{\lambda}{6}\LC 1 + \frac{1}{3\beta^2} \RC u^3 + \frac12 u^2 \eta \RB^\prime \\
&  - \lambda u \eta \eta' + \frac{\lambda}{3\beta^2} \eta u' = 0.
\end{split}
\end{equation}
We can rewrite the last two terms above as
\[
- \lambda u \eta \eta' + \frac{\lambda}{3\beta^2} \eta u' = \LC -\frac{\lambda}{2} u \eta^2 + \frac{\lambda}{3\beta^2} \eta u \RC^\prime + \frac{\lambda}{2} u^\prime \eta^2  - \frac{\lambda}{3\beta^2} \eta^\prime u.
\]
The definition of monotone front implies that $\frac{\lambda}{2} u^\prime \eta^2 - \frac{\lambda}{3\beta^2} \eta^\prime u \le 0$, and hence we have
\[
\frac{1 - \lambda^2}{2} \bar{\eta}^2 + \frac{B}{2} \bar{u}^2 + \frac{\lambda}{6}\LC 1 + \frac{1}{3\beta^2} \RC \bar{u}^3 + \frac{1}{2} \bar{u}^2 \bar{\eta} - \frac{\lambda}{2} \bar u \bar\eta^2 + \frac{\lambda}{3\beta^2} \bar u \bar\eta \le 0.
\]
Recalling \eqref{relation eta u} and the definition of $B$ the above inequality can be simplified to
\[
\bar u \LC \frac14 \bar\eta + \frac{B}{2} + \frac{1 - B}{6\lambda} \bar u \RC + \frac{\lambda}{3\beta^2} \bar\eta \le 0,
\]
which further leads to
\[
\bar u \LC \frac{B}{2} + \frac{1 - B}{6\lambda} \bar u \RC - \LC \frac14 \bar u + \frac{1 - B - \lambda^2}{\lambda} \RC \frac{\bar u^2}{2(1 - \lambda^2 - \lambda \bar u)} \le 0.
\]
Solving above yields
\[
\bar u \ge \frac{2\sqrt{4(1-B)^2(1-\lambda^2)^2 + 3(7 - 4B) B \lambda^2(1-\lambda^2)} - 4(1-B)(1 - \lambda^2)}{\lambda(7 - 4B)}.
\]
Combining this with \eqref{front root} and explicitly solving the resulting inequality leads to 
\begin{equation}\label{ineq h}
G(\lambda^2,t) \ge 0,
\end{equation}
where $t = 1 + \frac{1}{3\beta^2} > 1$ and
\begin{equation*}
G(z,t) := \LC -20 + \frac{13}{t} \RC z^3 + \LC -60 + \frac{33}{t} + 32 t \RC z^2 + \LC -39 + \frac{18}{t} + 32t \RC z - 9.
\end{equation*}
Recall from Definition \ref{def front} that we are only interested in the interval $z \in (0, 1/t^2)$. It is easy to see that 
\be\label{Gderiv at 0}
G_z(0, t), \ G_{zz}(0, t) > 0.
\ee
Looking at $G(z,t)$, we find that for $t > t_1$ sufficiently large, say $t_1 \approx 2.264$, we have
\[
G\LC \frac{1}{t^2}, t \RC < 0.
\]

For a fixed $t > 1$, solving a quartic inequality it follows that 
\begin{equation*}
G_{zz}\LC \frac{1}{t^2}, t \RC > 0 
\end{equation*}
when $t > t_2$ for some large enough $t_2$ (for example $t_2 \approx 1.68$). This together with \eqref{Gderiv at 0} and the fact that $G_{zz}(z, t)$ is linear in $z$ implies that for $t > t_2$, $G_{zz}(z, t) > 0$ for $0 < z < \frac{1}{t^2}$. Therefore we have
\[
G_z(z, t) > G_z(0, t) > 0 \quad \text{ for } \quad 0 < z < \frac{1}{t^2}.
\]

So for $t > \max\{t_1, t_2\}$, corresponding to $\beta^2 < \beta_0^2$ with
\[
\beta_0^2 = \min\left\{ \frac{1}{{3t_1 - 1}}, \frac{1}{{3t_2 - 1}} \right\},
\]
it yields that 
\[
G(z, t) < 0  \quad \text{ for } \quad 0 < z < \frac{1}{t^2},
\]
which contradicts \eqref{ineq h}. This completes the proof of the lemma. 
\end{proof}
\begin{remark}
Taking $t_1 \approx 2.264$ and $t_2 \approx 1.68$, we may choose $\beta_0 \approx 0.5$.   
\end{remark}

\subsection{Global continuation}\label{sec global}
Now that we have obtained the local bifurcation result, we will extend the local solution curves constructed in Section \ref{subsec bif stationary} to the non-perturbative regime using a global implicit function theorem developed in \cite{CWW}.

%
%

\begin{theorem}\label{thm global bif}
There exists a curve $\cm^{\textup{slow}}$ containing $\cm^{\textup{slow}}_{\textup{loc}}$, which admits a global $C^0$ parametrization
\[
\cm^{\textup{slow}} := \left\{ \LC u(s), \eta(s), \lambda(s) \RC: \ s\in (0, \infty) \right\} \subset \mathcal{O} \cap \mathscr{F}^{-1}(0)
\]
with $\lim_{s\searrow0}\LC u(s), \eta(s), \lambda(s) \RC = \LC u^+_0, \eta_0, 0 \RC$ and satisfies the following.
\begin{enumerate}[label=\rm(\alph*)]
  \item \label{well behaved} At each $s \in (0, \infty)$, the linearized operator $\F_{(u,\eta)}(u(s), \eta(s), \lambda(s)) \colon \Xspace \times \mathbb R^+ \to \Yspace$ is Fredholm index $0$.
  \item \label{alternatives} One of the following alternatives holds as $s \to \infty$.
    \begin{enumerate}[label=\rm(A\arabic*)]
    \item  \label{blowup alternative}
      \textup{(Blowup)}  The quantity 
      \begin{align}
        \label{blowup}
        N(s):= \|(u(s), \eta(s))\|_{\Xspace}+ \lambda(s) + \frac 1{\textup{dist}((u(s),\eta(s),\lambda(s)), \, \partial \mathcal{O})} \to \infty.
      \end{align}
    \item \label{loss of compactness alternative} \textup{(Loss of compactness)} There exists a sequence $s_n \to \infty$ with $\sup_n N(s_n) < \infty$, but $( u^+(s_n), \eta(s_n), \lambda(s_n) )$ has no convergent subsequence in $\Xspace \times \mathbb{R}^+$.  
      \item \label{loss of fredholmness alternative} \textup{(Loss of Fredholmness)}    There exists a sequence $s_n \to \infty$
        with $\sup_{n} N(s_n) < \infty$ and so that $(u(s_n), \eta(s_n), \lambda(s_n)) \to (u_*, \eta_*, \lambda_*)$ in $\Xspace \times \mathbb{R}^+$, however $\mathscr{F}_{(u,\eta)}(u_*, \eta_*, \lambda_*)$ is not Fredholm index $0$.  
   \item \label{loop alternative} \textup{(Closed loop)} There exists $T > 0$ such that $(u(s+T), \eta(s+T), \lambda(s+T)) = (u(s), \eta(s), \lambda(s))$ for all $s \in (0,\infty)$. 
        \end{enumerate}
          \item \label{global reparam} Near each point $(u(s_0),\eta(s_0), \lambda(s_0)) \in \cm^{\textup{slow}}$, we can locally reparameterize $\cm^{\textup{slow}}$ so that $s\mapsto (u(s),\eta(s), \lambda(s))$ is real analytic.
  \end{enumerate}
\end{theorem}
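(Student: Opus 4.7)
The plan is to apply the analytic global implicit function theorem of \cite{CWW} (quoted here as Theorem \ref{thm global bif} itself) to the operator $\mathscr F \colon \mathcal O_H \subset \Xspace \times \R^+ \to \Yspace$. Three ingredients must be verified: real-analyticity of $\mathscr F$; existence of a local seed curve; and Fredholm index $0$ of $\mathscr F_{(u,\eta)}$ at every solution on the would-be curve. Real-analyticity is immediate because \eqref{slow system} is polynomial in $(u,\eta,\lambda)$ composed with the bounded linear operator $\mathcal L = 1 - \beta^2\partial_x^2 \colon \Xspace \to \Yspace$. The seed is supplied by Theorem \ref{thm slow wave existence}: elliptic regularity lifts its $H^2_\even$ solutions to $C^{2+\alpha}_{\bdd,\even}$, so $\cm^{\textup{slow}}_{\textup{loc}} \subset \mathcal O_H \cap \mathscr F^{-1}(0)$, and the same upgrade gives $\mathcal O \cap \mathscr F^{-1}(0) = \mathcal O_H \cap \mathscr F^{-1}(0)$, so that the conclusion may be stated equivalently in either setting.

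To verify the Fredholm property (a) I would split the linearization as
\[
\mathscr F_{(u,\eta)}(U,\lambda)\,V \;=\; \mathcal L\LB A(\lambda) V\RB + B(U,\lambda)\,V,
\]
where $A(\lambda) = \begin{pmatrix} 1 & -\lambda\bigl(1 + \frac{1}{3\beta^2}\bigr) \\ -\lambda & 1 \end{pmatrix}$ is a constant matrix with $\det A(\lambda) = 1 - \lambda^2\bigl(1 + \frac{1}{3\beta^2}\bigr) > 0$ throughout $\mathcal O_H$, and $B(U,\lambda)$ is a matrix multiplication operator whose entries are polynomials in $(u,\eta,\lambda)$ with no constant term. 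Since $\mathcal L$ is an isomorphism $\Xspace \to \Yspace$ and $A(\lambda)$ is invertible, the principal part $\mathcal L \circ A(\lambda)$ is an isomorphism. On the other hand, any solution $(u,\eta)$ of $\mathscr F=0$ in $\Xspace$ is in $H^2$ by elliptic bootstrap on the equation, so the coefficients of $B$ vanish at infinity; a standard Rellich truncation argument then makes $B \colon \Xspace \to \Yspace$ compact. Hence $\mathscr F_{(u,\eta)}$ is a compact perturbation of an isomorphism and Fredholm of index $0$, giving (a).

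With these checks the abstract theorem of \cite{CWW} produces the global $C^0$ curve $\cm^{\textup{slow}}$ extending $\cm^{\textup{slow}}_{\textup{loc}}$, yields the local real-analytic reparameterization of (c), and delivers the four-way alternative of (b); the one-dimensionality of the kernel of $\mathscr F_{(u,\eta)}$ at the seed point, already established in Lemma \ref{lem kernel stationary}, handles the transversality condition needed at initialization. The step I expect to require the most care is confirming compactness of $B(U,\lambda)$ uniformly on bounded portions of the curve, because the $\Xspace$-norm does not by itself control spatial decay. However, within the CWW framework this is harmless: as long as $N(s)$ remains bounded one can propagate integrability from $\mathscr F=0$ and recover the decay, while any genuine breakdown of the Fredholm structure is precisely alternative (A3). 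Once Fredholmness is in hand the remainder of the statement is an immediate transcription of the abstract theorem.
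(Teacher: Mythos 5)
Your proposal is correct and follows essentially the same route as the paper: the result is obtained by invoking the abstract analytic global implicit function theorem of \cite{CWW} (together with \cite{CWW1}), with the key hypothesis being that $\mathscr F_{(u,\eta)}(u_0^+,\eta_0,0)$ is an isomorphism on the even spaces, which is exactly what Lemma \ref{lem kernel stationary} provides. The only cosmetic difference is in how the Fredholm index $0$ property along the curve is justified --- you argue via a compact-perturbation decomposition $\mathcal L\circ A(\lambda)+B$, whereas the paper identifies the linearization as a fourth-order ODE operator and uses a homotopy to the isomorphism at the seed point; both are standard and equivalent here.
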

\begin{proof}
The proof follows from \cite[Theorem B.1]{CWW} and \cite[Theorem 6.1]{CWW1}, since from Lemma \ref{lem kernel stationary} we know that $\F_{(u,\eta)}(u_0, \eta_0, \lambda_0) \colon \Xspace \times \mathbb R^+ \to \Yspace$ is an isomorphism. 
\end{proof}

Given a $(u, \eta, \lambda) \in \Xspace \times \R$, direct computation yields that the linearized operator
\[
    \F_{(u, \eta)}(u,\eta, \lambda)=\begin{pmatrix}
    \mathcal{L}+\eta & -\lambda\left( 1 + \frac{1}{3\beta^2} \right)\mathcal{L}+\left( \frac{\lambda}{3\beta^2} + u \right) \\
    -\lambda\mathcal{L}+u & \mathcal{L}
    \end{pmatrix} : \ \Xspace \to \Yspace.
\]
Since $(u, \eta) \in \Xspace$, the limiting operator as $|x| \to \infty$ is
\[
\tilde \F_{(u, \eta)}(u,\eta, \lambda) := \begin{pmatrix}
    \mathcal{L} & -\lambda\left( 1 + \frac{1}{3\beta^2} \right)\mathcal{L} +  \frac{\lambda}{3\beta^2} \\
    -\lambda\mathcal{L} & \mathcal{L}
    \end{pmatrix}: \ \Xspace \to \Yspace.
\]

\begin{lem}\label{lem limiting inv}
For $(u, \eta, \lambda) \in \mathcal{O}$, the  limiting operator $\tilde \F_{(u, \eta)}(u,\eta, \lambda)$ is invertible.
\end{lem}
\begin{proof}
If $V = (v, \zeta) \in \Xspace$ such that $\tilde \F_{(u, \eta)}(u,\eta, \lambda)[V] = 0$, then a row elimination yields
\[
\left[ 1 - \lambda^2\left( 1 + \frac{1}{3\beta^2} \right) \right]\mathcal{L} \zeta + \frac{\lambda^2}{3\beta^2} \zeta = 0.
\]
Thus $\zeta = 0$, which also implies that $\mathcal{L} v = 0$, and hence $v = 0$. This shows that $\tilde \F_{(u, \eta)}(u,\eta, \lambda)$ is injective. 

Now for any $f = (f_1, f_2) \in \Yspace$, consider solving $\tilde \F_{(u, \eta)}(u,\eta, \lambda)[V] = f$ for $V\in \Xspace$. By a similar argument as before, we can perform a row elimination to solve for $\zeta$ from 
\[
\left[ 1 - \lambda^2\left( 1 + \frac{1}{3\beta^2} \right) \right]\mathcal{L} \zeta + \frac{\lambda^2}{3\beta^2} \zeta = \lambda f_1  + f_2,
\]
and then plug this back to the system to solve for $v$. This way we verify that $\tilde \F_{(u, \eta)}(u,\eta, \lambda)$ is also surjective. Therefore the conclusion follows.  
\end{proof}

With the help of Lemma \ref{lem limiting inv}, we may follow the argument in \cite{Wheeler,CWW1} to prove that $\F_{(u, \eta)}(u,\eta, \lambda): \Xspace \to \Yspace$ is locally proper. Finally we have
\begin{lem}\label{lem index}
For $(u, \eta, \lambda) \in \mathcal{O}$, the linearized operator $\F_{(u, \eta)}(u,\eta, \lambda)$ is Fredholm with index 0.
\end{lem}
\begin{proof}
The lemma can be proved by a homotopy argument. Consider the operator $L_t := t \tilde \F_{(u, \eta)}(u,\eta, \lambda) + (1- t)\left(\F_{(u, \eta)}(u,\eta, \lambda) - \tilde \F_{(u, \eta)}(u,\eta, \lambda) \right)$ for $t\in [0,1]$. Thus for any $t\in [0,1]$ the limiting operator of $L_t$ is $\tilde \F_{(u, \eta)}(u,\eta, \lambda)$. The previous argument proves that $L_t$ is locally proper, and thus Fredholm. Hence by continuity of the index we see that 
\[
\text{ind} \F_{(u, \eta)}(u,\eta, \lambda) = \text{ind} L_1 = \text{ind} \tilde \F_{(u, \eta)}(u,\eta, \lambda) = 0,
\]
which completes the proof. 
\end{proof}

Now we have all needed properties to further winnow down the alternatives in Theorem \ref{thm global bif}. By Lemma \ref{lem index}, for any $(u(s),\eta(s), \lambda(s)) \in \cm^{\textup{slow}}$, $\F_{(u, \eta)}(u(s),\eta(s), \lambda(s)) \colon \Xspace \to \Yspace$ is Fredholm index 0. Thus we know that \ref{loss of fredholmness alternative} does not occur.

The loop alternative \ref{loop alternative} can also be ruled out by the nodal property Lemma \ref{lem noda} combined with the uniqueness results Proposition \ref{prop standing} and Corollary \ref{cor uniqueness}.

As for \ref{loss of compactness alternative}, we may adapt \cite[Lemma 6.3]{CWW1} in our current setting to give the following
\begin{lem}[Compactness or front] \label{compactness or front lemma}
Suppose that $\{ (u_n,\eta_n, \lambda_n) \} \subset \F^{-1}(0) \cap \mathcal{O}$ satisfies 
\[
	\sup_{n\geq 1}{\left( \n{(u_n, \eta_n)}_{\Xspace} + \frac 1{\textup{dist}((u_n,\eta_n, \lambda_n), \, \partial \mathcal{O})} \right)} < \infty,
\]
and each $(u_n, \eta_n)$ is strictly monotone in that $\partial_x u_n < 0$, $\partial_x \eta_n > 0$ for $x > 0$. Then, either 
\begin{enumerate}[label=\rm(\roman*)]
\item \label{compactness alternative} \textup{(Compactness)} $\{ (u_n,\eta_n, \lambda_n)\}$ has a convergent subsequence in $\Xspace \times \mathbb{R}$; or
\item \label{front alternative} \textup{(Monotone front)} there exists a sequence of translations $x_n \to +\infty$ so that we can extract a convergent subsequence
\[ 
	(u_n, \eta_n)(\placeholder+x_n) \longrightarrow (u, \eta) \in C_\bdd^{2+\alpha}(\R) \quad \textrm{in } C_\loc^{2}(\R), \qquad \lambda_n \longrightarrow \lambda,
\]
with $(u, \eta, \lambda) \in \mathcal{O}$.  The limit is a \emph{monotone front solution} of \eqref{slow system} in the sense of Definition \ref{def front}.
\end{enumerate}
\end{lem}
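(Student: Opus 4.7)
The argument is a concentration-compactness dichotomy exploiting the ODE structure of \eqref{slow system}. First, I extract a local limit: using the uniform Hölder bound $\|(u_n,\eta_n)\|_{\Xspace} \le C$ and interior Schauder estimates on \eqref{eqn eta}--\eqref{eqn u}, I upgrade to a uniform $C^{3+\alpha}_{\bdd}$ bound, and then Arzelà--Ascoli with a diagonal extraction furnishes a subsequence (not relabeled) with $\lambda_n \to \lambda_*$ and $(u_n, \eta_n) \to (u_*, \eta_*)$ in $C^2_{\loc}(\R)$. The limit solves \eqref{slow system}, inherits evenness and the wide-sense monotonicity on $(0, \infty)$, and the lower bound on $\textup{dist}(\cdot, \partial\mathcal O)$ keeps $(u_*, \eta_*, \lambda_*) \in \mathcal{O}_H$; in particular $B_* := 1 - \lambda_*^2(1 + \tfrac{1}{3\beta^2}) > 0$ and $1 - \lambda_*^2 > 0$.

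Next I set
\[
\Lambda := \lim_{R \to \infty}\,\limsup_{n \to \infty}\,\sup_{|x|\ge R}\bigl(u_n(x) + |\eta_n(x)|\bigr).
\]
If $\Lambda = 0$, the uniform decay of the tails combined with the $C^2_{\loc}$ convergence yields $L^\infty$ convergence on all of $\R$, and interpolation against the uniform $C^{3+\alpha}_{\bdd}$ bound upgrades this to convergence in $\Xspace$, producing alternative \ref{compactness alternative}.

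Suppose instead $\Lambda > 0$. A direct computation shows that the Jacobian at $(0,0)$ of the algebraic equilibrium system obtained from \eqref{eqn eta}--\eqref{eqn u} by dropping the second-derivative terms has determinant $-(1 - \lambda_*^2) B_* \neq 0$, so $(0,0)$ is an isolated zero and there exists $\varepsilon_0 > 0$ such that it is the only zero of that algebraic system inside $\{|u| + |\eta| < \varepsilon_0\}$. Fix $\varepsilon \in (0, \min(\varepsilon_0, \Lambda))$ and define
\[
x_n := \sup\bigl\{x > 0 : u_n(x) + |\eta_n(x)| \ge \varepsilon\bigr\}.
\]
This is finite because each $(u_n, \eta_n) \in H^2$ (via the identification $\mathcal O \cap \F^{-1}(0) = \mathcal O_H \cap \F^{-1}(0)$ from elliptic regularity) and hence decays at infinity; monotonicity yields $u_n(x_n) + |\eta_n(x_n)| = \varepsilon$ and $u_n(x) + |\eta_n(x)| \le \varepsilon$ for $x \ge x_n$, while $\Lambda > \varepsilon$ forces $x_n \to \infty$.

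Setting $(\til u_n, \til \eta_n)(x) := (u_n, \eta_n)(x + x_n)$, one more Arzelà--Ascoli extraction yields a $C^2_{\loc}$-convergent subsequence with limit $(\til u, \til \eta, \lambda_*)$ solving \eqref{slow system} on all of $\R$. Since any fixed bounded interval eventually lies inside $(-x_n, \infty)$, the wide-sense monotonicity $\til u' \le 0$, $\til \eta' \ge 0$ and the sign conditions $\til u \ge 0$, $\til \eta \le 0$ are inherited on $\R$; the ODE-uniqueness argument used in the proof of Lemma \ref{lem closed}, together with $\til u(0) + |\til \eta(0)| = \varepsilon > 0$, rules out the trivial limit and promotes these to strict inequalities. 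Finally, monotonicity forces $(\bar u, \bar \eta) := \lim_{x \to +\infty}(\til u, \til \eta)$ to exist and, feeding into the ODE, to satisfy the algebraic equilibrium system, while construction gives $|\bar u| + |\bar \eta| \le \varepsilon < \varepsilon_0$, so $(\bar u, \bar \eta) = (0,0)$. This verifies Definition \ref{def front} and yields alternative \ref{front alternative}. The main obstacle is precisely this last step: preventing the translated limit from stabilizing to a nontrivial constant end-state at $+\infty$; the non-degeneracy of the algebraic equilibrium linearization at the origin, which holds throughout the slow regime $\mathcal O$, is what permits a clean choice of $\varepsilon$ small enough to exclude it.
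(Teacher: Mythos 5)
Your proof is correct, and its skeleton is the same as the paper's: a dichotomy between uniform decay of the tails (giving $\Xspace$-convergence) and a sequence of translations $x_n\to+\infty$ along which one extracts a $C^2_\loc$ limit that is a nontrivial monotone solution. The one genuine difference is in how the translation points are chosen, and it buys you something. The paper negates \emph{equi-decay in $C^2$} and takes $x_n$ to be points where $\sup_{0\le i\le 2}|\partial_x^i(u_n,\eta_n)(x_n)|\ge\varepsilon_0$; its written proof then verifies nontriviality, the signs, and the weak monotonicity of the limit, but never explicitly checks the condition $\lim_{x\to+\infty}(u,\eta)=(0,0)$ required by Definition \ref{def front} (this is implicitly deferred to the cited \cite[Lemma 6.3]{CWW1}). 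You instead take $x_n$ to be the last crossing of the level set $\{u_n+|\eta_n|=\varepsilon\}$ with $\varepsilon$ below the isolation radius of the origin as a zero of the equilibrium (algebraic) system --- the Jacobian computation giving determinant $-(1-\lambda_*^2)B_*\neq 0$ is correct, since $B_*>0$ forces $\lambda_*^2<1$ in $\mathcal O$. This choice pins the translated limit's value at $x=0$ to exactly $\varepsilon$ and traps its limit at $+\infty$ inside the isolation ball, so the end-state at $+\infty$ must be $(0,0)$; your argument is therefore more complete on precisely the point the paper glosses over. Two minor polish items: $\Lambda>\varepsilon$ only forces $\limsup x_n=\infty$, so you should pass to a further subsequence (harmless, since the conclusion is subsequential anyway); and in the compactness branch your $\Lambda$ controls only function values, so the upgrade to $C^{2+\alpha}$ tail-smallness really does need the interpolation against the uniform $C^{3+\alpha}$ bound you obtain from the equation --- you say this, and it works because $B_n$ is bounded away from zero by the distance hypothesis.
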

\begin{proof}
Given the assumptions of the lemma, we know that up to a subsequence $\lambda_n \to \lambda$ with $1 - \lambda^2 \LC 1 + \frac{1}{3\beta^2} \RC > 0$. If $(u_n, \eta_n)$ is equi-decaying in the sense that for any $\varepsilon > 0$ there exists some $R > 0$ such that
\[
\sup_n \| (u_n, \eta_n) \|_{C^2((R,\infty))} < \varepsilon,
\]
then obviously $(u_n, \eta_n)$ has a convergent subsequence in $\Xspace$, and hence leads to \ref{compactness alternative}.

If $(u_n, \eta_n)$ is not equi-decaying, then there exists some $\varepsilon_0 > 0$ and a sequence $\{x_n\}$ with $x_n \to +\infty$ such that for all $n \ge 1$, 
\[
\sup_{0 \le i \le 2} \LV \partial_x^i (u_n, \eta_n)(x_n) \RV \ge \varepsilon_0.
\]
Set $(v_n, \zeta_n) := (u_n, \eta_n)(\placeholder + x_n)$. Since $(v_n, \zeta_n)$ is uniformly bounded in $\Xspace$, there is a subsequence, still denoted by the same labeling, $(v_n, \zeta_n) \to (u, \eta) \in \Xspace$ in $C^2_{\textup{loc}}(\R)$. Local convergence is enough to ensure that $(u^+, \eta)$ solves \eqref{slow system}. The monotonicity of $(u_n, \eta_n)$ confirms that 
\[
\partial_x u \le 0, \qquad \partial_x \eta \ge 0.
\]
By definition of $(v_n, \zeta_n)$ we see that
\[
\LV \partial_x^i (u, \eta)(0) \RV \ge \varepsilon_0 \qquad \text{for some } \ i \le 2.
\]
Thus $(u, \eta) \not\equiv (0, 0)$. The maximum principle then implies that $u > 0$ and $\eta < 0$.
\end{proof}

Putting all of the above together, we finally arrive at our main result of this section.
\begin{theorem}[Slow waves]\label{thm global slow}
For any $\beta$ with $|\beta| < \beta_0$ where $\beta_0$ is given in Lemma \ref{lem no fronts}, the global curve $\cm^{\textup{slow}}$ constructed in Theorem \ref{thm global bif} enjoys the following properties.
\begin{enumerate}[label=\rm(\alph*)]
\item \label{global monotone part} \textup{(Symmetry and monotonicity)} Each solution on $\cm^{\textup{slow}}$ is even and
\begin{equation}
  \begin{aligned}
    \eta(s) &< 0 & \quad & u(s) > 0 & \quad &  \textrm{on } \mathbb{R}, \\
    \partial_x \eta(s) & > 0 & \quad & \partial_x u(s) < 0 & \quad & \textrm{on } \mathbb{R^+}.
  \end{aligned} \label{monotonicity soln} 
\end{equation}
\item \label{loss of ellipticity part} \textup{(Loss of ellipticity)} Following $\cm^{\textup{slow}}$ to its extreme, the system loses ellipticity in that 
\begin{equation}\label{slow extreme}
\lim_{s \to \infty} \lambda(s) = \LC 1 + \frac{1}{3\beta^2} \RC^{-1/2}.
\end{equation}
\end{enumerate}
\end{theorem}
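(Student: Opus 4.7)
The plan has two parts matching the two claims.

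For claim (a), $\cm^{\textup{slow}}$ is by construction a connected subset of $\mathcal{O}\cap\mathscr F^{-1}(0)$ containing $\cm^{\textup{slow}}_{\textup{loc}}$, so Lemma \ref{lem noda} directly gives the nodal pattern \eqref{nodal} at every point, and evenness follows from working in $\Xspace$. This immediately yields \eqref{monotonicity soln}.

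For claim (b), the strategy is to first narrow the alternatives in Theorem \ref{thm global bif} down to the blowup alternative, and then to show that the blowup can only manifest as $\lambda(s)$ approaching the ellipticity threshold. Alternatives \ref{loss of fredholmness alternative} and \ref{loop alternative} are already excluded in the paragraphs following Theorem \ref{thm global bif}, using respectively homotopy invariance of the Fredholm index and the uniqueness results of Proposition \ref{prop standing}, Corollary \ref{cor uniqueness}, and Lemma \ref{lem noda}. Part (a) supplies the strict monotonicity required to apply Lemma \ref{compactness or front lemma}, so loss of compactness \ref{loss of compactness alternative} would force the existence of a monotone front solution of \eqref{slow system}; but Lemma \ref{lem no fronts} rules these out precisely under the standing hypothesis $\beta^2 < 0.26$. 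Hence only the blowup alternative \ref{blowup alternative} survives: $N(s) \to \infty$ as $s \to \infty$.

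To convert this into \eqref{slow extreme}, note that $\lambda(s)$ is a priori bounded by $(1 + \tfrac{1}{3\beta^2})^{-1/2}$ inside $\mathcal{O}$, so the blowup of $N(s)$ is either the desired $\lambda(s) \to (1+\tfrac{1}{3\beta^2})^{-1/2}$ or a blowup of $\|(u(s),\eta(s))\|_\Xspace$ along a subsequence $s_n$ with $\lambda(s_n) \to \lambda_\ast < (1+\tfrac{1}{3\beta^2})^{-1/2}$. I would rule out the latter by a pointwise analysis at the symmetry point $x=0$. By part (a), $M_n := u(s_n,0)$ and $|m_n| := |\eta(s_n,0)|$ are the respective extrema, with $u'(0) = \eta'(0) = 0$, $u''(0) \le 0$, and $\eta''(0) \ge 0$. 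Evaluating \eqref{eqn u} at $x=0$ and using $-\beta^2 B_n u''(0) \ge 0$ forces $|m_n|$ to be at least of order $M_n$ once $M_n$ is large. Substituting this back into \eqref{eqn eta} at $x=0$ collapses the algebraic balance at leading order to
\begin{equation*}
\tfrac12 \bigl[\lambda_n^2(1+\tfrac{1}{3\beta^2}) - 1\bigr] M_n^2 + \textup{lower order} = 0,
\end{equation*}
which, as $M_n \to \infty$, forces $\lambda_n \to (1+\tfrac{1}{3\beta^2})^{-1/2}$ and contradicts the assumption $\lambda_n \to \lambda_\ast < (1+\tfrac{1}{3\beta^2})^{-1/2}$. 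The complementary case $M_n$ bounded but $|m_n|\to\infty$ is disposed of separately: in the regime $M_n < (1-\lambda_n^2)/\lambda_n$, the middle term $(1-\lambda_n^2-\lambda_n M_n)\eta_n(0)$ of \eqref{eqn eta} tends to $-\infty$, forcing $\eta''(0) < 0$, contradicting monotonicity.

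The main technical obstacle will be controlling the ``lower order'' corrections in the pointwise balance, since they involve $u''(s_n,0)$ and $\eta''(s_n,0)$, neither of which is directly tamed by norm blowup alone. My plan is to solve the equations \eqref{eqn eta}--\eqref{eqn u} algebraically for the second derivatives in terms of $M_n$, $m_n$, and $\lambda_n$, then bootstrap and feed the resulting relations back into the balance at $x=0$ to close the contradiction.
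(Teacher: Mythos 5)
Your part \ref{global monotone part} and the first half of part \ref{loss of ellipticity part} -- ruling out \ref{loss of fredholmness alternative} and \ref{loop alternative} by the preceding discussion, and \ref{loss of compactness alternative} by Lemma \ref{compactness or front lemma} together with Lemma \ref{lem no fronts} under $\beta^2<0.26$ -- coincide with the paper's argument. The genuine gap is in the final step, excluding a norm blowup along a subsequence with $\lambda_n\to\lambda_*<(1+\tfrac{1}{3\beta^2})^{-1/2}$. A purely pointwise analysis at $x=0$ cannot close: the only a priori information on the second derivatives at the crest/trough is their signs ($u_n''(0)\le 0$, $\eta_n''(0)\ge 0$), and feeding these into \eqref{eqn u} and \eqref{eqn eta} yields exactly two one--sided inequalities, namely $\eta_n(0)\lesssim -\tfrac{\lambda_n}{2}(1+\tfrac{1}{3\beta^2})u_n(0)$ and $(1-\lambda_n^2-\lambda_n u_n(0))\eta_n(0)+\tfrac12 u_n^2(0)\ge 0$. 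Once $u_n(0)$ is large the second inequality is automatically satisfied (its first term is positive), so the two relations are mutually consistent for \emph{every} subcritical $\lambda_*$; there is no ``algebraic balance'' $\tfrac12[\lambda_n^2(1+\tfrac{1}{3\beta^2})-1]u_n(0)^2+\text{l.o.t.}=0$ to be extracted. Your proposed fix -- solving the equations for $u_n''(0)$, $\eta_n''(0)$ and bootstrapping -- merely reproduces these same inequalities, since the two equations at a single point contain two unknown second derivatives and no further constraint.

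What the paper actually uses to close this step is global, not pointwise, information: (i) the identity \eqref{int eqn}, integrated over $(0,\infty)$, whose non-exact cross terms $\tfrac{\lambda}{2}u'\eta^2-\tfrac{\lambda}{3\beta^2}\eta' u$ are signed using the monotonicity from part \ref{global monotone part}; the boundary contribution at $x=0$ then gives the superlinear lower bound $|\eta_n(0)|\gtrsim u_n(0)^{3/2}$ via \eqref{eta u bd 1}; and (ii) the one-parameter family of combination equations \eqref{eqn combo}, evaluated at the interior minimum point $x_n$ of the combination $\delta\lambda_n(1+\tfrac{1}{3\beta^2})\eta_n+[1-(1+\delta)\lambda_n^2(1+\tfrac{1}{3\beta^2})]u_n$, which produces a growth rate for $\eta_n(x_n)$ incompatible with \eqref{asympt eta}. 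Neither ingredient appears in your sketch, and without something playing their role the contradiction does not materialize. A secondary issue: your treatment of the case ``$u_n(0)$ bounded, $|\eta_n(0)|\to\infty$'' only covers the regime $\lambda_n u_n(0)<1-\lambda_n^2$; the paper instead evaluates the second equation of \eqref{slow system} at the minimum of $\eta_n-\lambda_n u_n$ to get $\tfrac12 u_n^2(0)+(\eta_n-\lambda_n u_n)(0)\ge 0$, which bounds $\eta_n(0)$ from below whenever $u_n(0)$ is bounded, with no restriction on the sign of $1-\lambda_n^2-\lambda_n u_n(0)$.
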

\begin{proof}
Note that property \ref{global monotone part} follows from the nodal properties Lemma \ref{lem noda}. From the previous discussion, at the extreme of the solution curve, \ref{loss of fredholmness alternative} and \ref{loop alternative} cannot occur. Lemma \ref{compactness or front lemma} together with Lemma \ref{lem no fronts} rules out \ref{loss of compactness alternative}. Therefore we are only left with {blowup alternative}. Since $\lambda$ is always bounded in $\mathcal O$, one can remove $\lambda(s)$ from the blowup quantity in \eqref{blowup}.

From the local uniqueness and the nodal properties we know that $\lim_{s \to \infty} \lambda(s) > 0$. So if \eqref{slow extreme} is false, then there exists a sequence $\{s_n\}$, $s_n \to \infty$ with the corresponding solutions $(u_n, \eta_n, \lambda_n) := (u, \eta, \lambda) (s_n) \in \mathcal{O} \cap \mathscr{F}^{-1}(0)$ such that
\begin{equation}\label{limit bu}
\lambda_n \to \lambda_* < \LC 1 + \frac{1}{3\beta^2} \RC^{-1/2}, \qquad \| (u_n, \eta_n) \|_{\Xspace} \to \infty.
\end{equation}
Moreover $\lambda_* > 0$. Since the system \eqref{slow system} is semi-linear, weakly coupled and has no first-order derivatives, elliptic regularity implies that $\| (u_n, \eta_n) \|_{C^0} \to \infty$. From \ref{global monotone part}, this is equivalent to
\[
u_n(0) - \eta_n(0) \to \infty.
\]
From the second equation in \eqref{slow system} and the fact that $(\eta_n - \lambda_n u_n)(0) = \min (\eta_n - \lambda_n u_n)$, it follows that
\[
\frac12 u_n^2(0) + (\eta_n - \lambda_n u_n)(0) = \beta^2 (\eta_n - \lambda_n u_n)^{\prime\prime}(0) \ge 0.
\]
From this it must hold that
\[
u_n(0) \to \infty.
\]

Similarly, evaluating \eqref{eqn u} at $x = 0$ and using that $u_n''(0) \le 0$ we find that
\[
\eta_n(0) \le - \frac{1 - \lambda_n^2\LC 1 + \frac{1}{3\beta^2} \RC + \frac{\lambda_n}{2} \LC 1 + \frac{1}{3\beta^2} \RC u_n(0)}{\frac{\lambda_n}{3\beta^2} + u_n(0)} u_n(0).
\]
For $n$ sufficiently large, from the above inequality, we have that
\be\label{bound eta 1}
\eta_n(0) \le -\frac{2\lambda_n}{5} \LC 1 + \frac{1}{3\beta^2} \RC u_n(0).
\ee

Recall \eqref{int eqn}. Integrating the equation over $(0, \infty)$ we find that 
\begin{equation*}
\frac{1 - \lambda_n^2}{2} \eta_n^2(0) + \frac{1}{2} u_n^2(0) \LB B + \frac{\lambda_n}{3} \LC 1 + \frac{1}{3\beta^2} \RC u_n(0) + \eta_n(0) \RB > 0.
\end{equation*}
From \eqref{bound eta 1} we obtain
\begin{equation}\label{eta u bd 1}
\frac{1 - \lambda_n^2}{2} \eta_n^2(0) + \frac{1}{2} u_n^2(0) \LB B - \frac{\lambda_n}{15} \LC 1 + \frac{1}{3\beta^2} \RC u_n(0) \RB > 0.
\end{equation}

Further using \eqref{slow system} we have that for any $\delta > 0$,
\begin{equation}\label{eqn combo}
\begin{split}
\mathcal{L} & \left\{ \delta \lambda_n\LC 1 + \frac{1}{3\beta^2} \RC \eta_n + \LB 1 - (1+\delta) \lambda_n^2\LC 1 + \frac{1}{3\beta^2} \RC  \RB u_n  \right\} \\
& \quad + \LC \frac{\lambda_n}{3\beta^2} + u_n \RC \eta_n + \frac12 (1 + \delta) \lambda_n \LC 1 + \frac{1}{3\beta^2} \RC u_n^2 = 0.
\end{split}
\end{equation}
From \eqref{limit bu} there exists $\delta_0 > 0$ such that for $n$ sufficiently large,
\[
1 - (1+\delta) \lambda_n^2\LC 1 + \frac{1}{3\beta^2} \RC > 0 \qquad \text{for all } \ 0 < \delta < \delta_0.
\]
From \eqref{eta u bd 1} we see that
\[
|\eta_n(0)| = O(|u_n(0)|^{3/2}) \qquad \text{as } \ n \to \infty. 
\]
Therefore for any $0 < \delta < \delta_0$ there exists some $n_0$ large enough such that for $n \ge n_0$
\[
\delta \lambda_n\LC 1 + \frac{1}{3\beta^2} \RC \eta_n(0) + \LB 1 - (1+\delta) \lambda_n^2\LC 1 + \frac{1}{3\beta^2} \RC  \RB u_n(0) < 0.
\] 
Denote by $x_n \in [0, \infty)$ the point where $\delta \lambda_n\LC 1 + \frac{1}{3\beta^2} \RC \eta_n + \LB 1 - (1+\delta) \lambda_n^2\LC 1 + \frac{1}{3\beta^2} \RC  \RB u_n$ achieves its minimum. Then it holds that $\eta_n(0) \le \eta_n(x_n) < 0$, and
\[
\delta \lambda_n\LC 1 + \frac{1}{3\beta^2} \RC \eta_n(x_n) \le \delta \lambda_n\LC 1 + \frac{1}{3\beta^2} \RC \eta_n(0) + \LB 1 - (1+\delta) \lambda_n^2\LC 1 + \frac{1}{3\beta^2} \RC  \RB \LB u_n(0) - u_n(x_n) \RB.
\]
From this we conclude that
\be\label{asympt eta}
|\eta_n(x_n)| = O(|u_n(0)|^{3/2}) \qquad \text{as } \ n \to \infty. 
\ee
Evaluating equation \eqref{eqn combo} at $x_n$ indicates that 
\[
\LC \frac{\lambda_n}{3\beta^2} + u_n(x_n) \RC \eta_n(x_n) + \frac12 (1 + \delta) \lambda_n \LC 1 + \frac{1}{3\beta^2} \RC u_n^2(x_n) > 0,
\]
which contradicts the asymptotics \eqref{asympt eta}.
\end{proof}

\section{Bifurcation from classical Boussinesq supercritical waves} \label{sec supercritical}

In this section we focus on fast traveling solitary waves with wave speed $\lambda > 1$. Different from the previous section, here we will consider the wave speed as given, and restrict the four parameters $(a, b, c, d)$ on a one-parameter curve to perform the bifurcation. The base point of the bifurcation corresponds to the solution to the classical Boussinesq system which has $a = b = c = 0$ and $d = \frac13$ in \eqref{1dBoussinesq} (see, for example, \cite{Amick,Bous1,Peregrine,Schonbek}). As is discussed in \cite{Chen}, the solitary waves $(u_f, \eta_f)$ satisfy
\begin{equation}\label{fast base soln}
\left\{\begin{array}{ll} 
\displaystyle (u^\prime_f)^2 = \frac{1}{\lambda} \LC -u_f^3 + 3\lambda u_f^2 + 6 u_f + 6\lambda \log \LV \frac{\lambda - u_f}{\lambda} \RV \RC, & \medskip \\
\displaystyle \eta_f = \frac{u_f}{\lambda - u_f}. \vspace{0.1in} & 
\end{array}\right.
\end{equation}
From classical ODE techniques one obtains that for any $\lambda > 1$ there exists a unique solution $(u_f, \eta_f) \in \Xspace$ such that 
\begin{equation}\label{base soln prop}
\left\{\begin{split}
u_f, \eta_f & \text{ are both monotonically decreasing from their crests at } x = 0, \text{ and } \\
& \qquad \frac12\LC 3\lambda - \sqrt{\lambda^2 + 8} \RC < \max_{x\in \R} |u_f| < \lambda.
\end{split}\right.
\end{equation}

\subsection{Local solutions}\label{subsec B local}
Now for any fixed $k  > 0$ with $k  <  \lambda$, consider the parameter curve 
\begin{equation}\label{para curve 1}
a = c = k  s, \qquad b = s, \qquad d = \frac13 - (2k  + 1)s.
\end{equation}
Thus $b = d$ only when $2(k  + 1) s = \frac13$. So in particular $b \ne d $ when $s$ is small. Moreover we also allow $a, c$ to be negative.

Similar as before, in this parameter regime we can rewrite \eqref{solieqn} as 
\begin{equation}\label{fast system}
\mathscr{F}(U, s) := \begin{pmatrix} \displaystyle k  s u'' + \lambda s \eta'' + u - \lambda \eta + \eta u, \\\\
\displaystyle \LB \frac13 - (2k  + 1) s \RB \lambda u'' + k  s \eta'' - \lambda u + \eta + \frac12 u^2 \end{pmatrix} = 0,
\end{equation}
with $\mathscr F: \Xspace \times \R \to \Yspace$. 

The existence of solitary waves in the parameter regime \eqref{para curve 1} is stated as follows
\begin{theorem}[Fast waves near the Boussinesq solutions]\label{thm fast way existence}
For any $\lambda > 1$, let $k $ be such that $0 < k  < \lambda$. Suppose that the parameters of \eqref{solieqn} satisfy \eqref{para curve 1}. Then there exist some positive $\delta > 0$ and a unique $C^0$ solution curve 
\[
\cm^{\textup{fast}}_{\textup{loc}} = \{ (u_s, \eta_s, s): \ |s| < \delta \} \subset \Xspace \times \R
\]
to problem \eqref{fast system} with the property that 
\begin{align}
& (u_s, \eta_s) = (u_f, \eta_f) + O(s) \qquad \text{in} \quad \Xspace, \label{local est} \\
& u_s, \  \eta_s > 0 \qquad \text{for } \ s \ge 0, \label{local sign}
\end{align}
where $(u_f, \eta_f)$ is the unique solution to \eqref{fast base soln} satisfying \eqref{base soln prop}.
\end{theorem}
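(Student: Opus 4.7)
The plan is to apply the Implicit Function Theorem at the base point $(u_f, \eta_f, 0) \in H^2_\even(\R) \times H^2_\even(\R) \times \R$, in parallel with the proof of Theorem~\ref{thm slow wave existence} but requiring extra care because the linearization degenerates at $s = 0$. First I would verify that $\mathscr{F}(u_f, \eta_f, 0) = 0$: at $s = 0$ the first component of \eqref{fast system} collapses to the algebraic identity $u_f - \lambda\eta_f + \eta_f u_f = 0$, which is equivalent to $\eta_f = u_f/(\lambda - u_f)$, and substituting this into the second component produces the ODE whose first integral is precisely \eqref{fast base soln}.

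Next, I would compute the Fr\'echet derivative
\[
\mathscr{F}_U(u_f, \eta_f, 0)[v, \zeta] = \begin{pmatrix} (1 + \eta_f) v + (u_f - \lambda) \zeta \\ \dfrac{\lambda}{3} v'' + (u_f - \lambda) v + \zeta \end{pmatrix},
\]
noting that its first row has order zero, so the operator is not elliptic in the classical sense. However, since $\lambda - u_f$ is uniformly bounded below by a positive constant (thanks to $\max |u_f| < \lambda$ in \eqref{base soln prop}), a Schur-complement reduction applies: given targets $(g_1, g_2) \in L^2_\even(\R) \times L^2_\even(\R)$, one sets $\zeta = [(1 + \eta_f) v - g_1]/(\lambda - u_f)$ from the first row and substitutes into the second. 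Using $1 + \eta_f = \lambda/(\lambda - u_f)$, this reduces invertibility of $\mathscr{F}_U(u_f, \eta_f, 0)$ to invertibility of the scalar Sturm--Liouville operator
\[
L v := \frac{\lambda}{3} v'' + \LB (u_f - \lambda) + \frac{\lambda}{(\lambda - u_f)^2} \RB v
\]
on $H^2_\even(\R) \to L^2_\even(\R)$. Since $u_f$ decays at infinity, the coefficient of $v$ in $L$ tends to $-(\lambda^2 - 1)/\lambda < 0$ (using $\lambda > 1$), so $L$ is a compact perturbation of a strictly negative definite second order operator, hence Fredholm of index zero on $H^2 \to L^2$. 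Standard ODE asymptotic analysis gives a one-dimensional $L^2$ kernel for $L$, and differentiating the base ODE in $x$ yields $L u_f' = 0$ (equivalently, translation invariance of the $s=0$ equation), so the kernel is spanned by $u_f'$. As $u_f'$ is odd, the kernel restricted to $H^2_\even$ is trivial; since $L$ is formally self-adjoint with even coefficients, the cokernel in $L^2_\even$ is also trivial, so $L$ (and hence the full linearization) is an isomorphism on the even subspaces. The Implicit Function Theorem then produces the desired unique local curve with the expansion \eqref{local est}.

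For the sign property \eqref{local sign}, I would use a maximum principle / contradiction argument in the spirit of the proof of Theorem~\ref{thm slow wave existence}. From \eqref{base soln prop}, $u_f > 0$ on $\R$, and then $\eta_f = u_f/(\lambda - u_f) > 0$; by $H^2 \hookrightarrow L^\infty$ and \eqref{local est}, on any fixed compact $[-R, R]$ both $u_s$ and $\eta_s$ remain strictly positive for small $s$. To exclude sign changes outside $[-R,R]$, where $(u_s, \eta_s)$ is uniformly small, I would suppose for contradiction $u_s(x_0) = 0$ at some $|x_0| > R$: the minimum conditions $u_s'(x_0) = 0$, $u_s''(x_0) \geq 0$ substituted into both components of \eqref{fast system}, which become strictly elliptic for $s > 0$, yield two independent expressions for $\eta_s(x_0)$ that, upon eliminating $\eta_s''(x_0)$, combine into a relation incompatible with the smallness of $(u_s, \eta_s)$ at $x_0$, forcing a contradiction. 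The argument for $\eta_s$ is analogous.

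The main obstacle is the non-ellipticity of $\mathscr{F}_U(u_f, \eta_f, 0)$: unlike the slow-wave setting of Lemma~\ref{lem kernel stationary}, where the leading part was a genuinely elliptic diagonal operator, here the vanishing of the $(b, d)$-type coefficients at $s = 0$ mixes orders and prevents a direct application of standard ODE Fredholm theory. The Schur-complement reduction salvages the argument, but it relies crucially on the stagnation separation $\max u_f < \lambda$ from \eqref{base soln prop}; this same stagnation threshold is what motivates its appearance as one of the terminal alternatives of the global continuation in Theorem~\ref{thm global bif fast}.
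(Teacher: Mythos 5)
Your treatment of the linearization is essentially the paper's own argument: the paper also solves $\zeta$ from the zeroth-order first row, substitutes into the second to arrive at the scalar equation \eqref{reduced fast ode}, notes that the bounded solutions are spanned by the odd function $u_f'$, and concludes invertibility on the even subspaces before invoking the Implicit Function Theorem. Your Schur-complement phrasing and the Fredholm/self-adjointness bookkeeping are a slightly more explicit rendering of the same step; you also share with the paper the unaddressed point that at $s=0$ the first row of the linearization takes values in $H^2$ rather than all of $L^2$, so the codomain in which the isomorphism is asserted deserves more care than either of you gives it.

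The genuine gap is in your argument for the sign property \eqref{local sign}. At a point $x_0$ realizing a negative infimum of $u_s$ (you should argue at the infimum, not at a zero), eliminating $\eta_s''(x_0)$ between the two components of \eqref{fast system} produces exactly \emph{one} relation --- the combined equation \eqref{fast u eqn} --- and with $u_s'(x_0)=0$, $u_s''(x_0)\ge 0$ it yields only that $\eta_s(x_0)<0$ and $u_s(x_0)\ge c_1\,\eta_s(x_0)$ for some constant $c_1\in(0,1)$. This is entirely compatible with $u_s$ and $\eta_s$ both being small and negative in the tail, so no contradiction with smallness at the single point $x_0$ is available, contrary to what you claim. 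The paper closes the argument with a two-point comparison: it passes to the infimum $x_1$ of $\eta_s$, applies the combined $\eta$-equation \eqref{fast eta eqn} there to obtain the reverse contraction $\eta_s(x_1)> u_s(x_1)$, and chains $u_s(x_0)\ge\eta_s(x_0)\ge\eta_s(x_1)>u_s(x_1)\ge u_s(x_0)$ to reach a contradiction. Treating $u_s$ and $\eta_s$ ``analogously'' but separately, as you propose, misses this necessary interleaving of the two minima; you would need it (or some other coupled minimum principle) for the argument to close, and you should also record the final upgrade from $u_s,\eta_s\ge0$ to strict positivity via ODE uniqueness.
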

\begin{proof}
Denote $U_f := (u_f, \eta_f)$. Working with even functions, direct computation yields that
\begin{equation*}
\mathscr{F}_U(U_f, 0) = \begin{pmatrix} 1 + \eta_f & u_f - \lambda \\ 
\displaystyle \frac{\lambda}{3} \partial_x^2 + u_f - \lambda & 1 \end{pmatrix}: \Xspace \to \Yspace.
\end{equation*}

Suppose that $\mathscr{F}_U(U_f, 0)[V] = 0$ for some $V = (v, \zeta) \in \Xspace$. Writing out the equations we have
\begin{equation*}
\begin{split}
&(1+\eta_f)v+(u_f - \lambda)\zeta=0,    \\
&\frac{\lambda}{3}v''+(u_f - \lambda)v+\zeta=0.
\end{split}
\end{equation*}
We can then solve for $\zeta$ in the first equation to obtain a single ODE for $v$ 
\begin{equation}\label{reduced fast ode}
\frac{\lambda}{3}v''+ \LB \frac{\lambda}{(\lambda-u_f)^2}-(\lambda-u_f) \RB v=0.  
\end{equation}
Since $\lambda > 1$ and $u_f$ satisfies \eqref{base soln prop}, from classical ODE theory we know that there is only one bounded nontrivial solution to the above equation. On the other hand from the translation invariance of \eqref{fast system} we see that $u_f'$ solves \eqref{reduced fast ode}. From the fact that $u_f$ is even, it follows that $\textup{ker} \mathscr F_{U}(U_f, 0)$ is trivial in $\Xspace$.

The surjectivity of $\mathscr F_{U}(U_f, 0)$ can be easily verified since one can effectively solve $\zeta$ in terms of $v$ through an algebraic equation, and then solve an ODE for $v$. Therefore we further conclude that $\mathscr F_{U}(U_f, 0)$ is invertible. Thus the existence of the local solution curve and \eqref{local est} follows from the Implicit Function Theorem. 

Next let's turn to the sign property. For any $\varepsilon > 0$ we can find an $R_0 > 0$ such that 
\[
|u_f(x)|, \ |\eta_f(x)| < \varepsilon \qquad \text{for} \quad |x| > R_0.
\]
From \eqref{local est} we know that by choosing $s$ sufficiently small,
\begin{equation}\label{asympt sign}
\left\{\begin{array}{ll}
u_s(x), \ \eta_s(x) > 0 \quad & \text{for} \quad |x| \le R_0, \\
|u_s(x)|, \ |\eta_s(x)| < \varepsilon + O(s) \quad & \text{for} \quad |x| > R_0.
\end{array}\right.
\end{equation}
From \eqref{fast system} we have
\begin{equation}\label{fast u eqn}
\left( \LB \frac13 - (2k  + 1)s \RB \lambda^2 - k ^2 s  \right) u_s'' - \LB (k  + \lambda^2) - \frac{\lambda}{2} u_s \RB u_s + \LB \lambda + k  (\lambda - u_s) \RB \eta_s = 0.
\end{equation}
So if $\inf u_s = u_s(x_0) < 0$, then from \eqref{asympt sign} $|x_0| > R_0$. For small $s$, the maximum principle implies that $\eta_s(x_0) < 0$ and
\begin{equation*}
u_s(x_0) \ge \frac{\lambda + k  (\lambda - u_s(x_0))}{k  + \lambda^2 - \lambda u_s(x_0) / 2} \eta_s(x_0) > \eta_s(x_0),
\end{equation*}
since 
\[
0 < \frac{\lambda + k  (\lambda - u_s(x_0))}{k  + \lambda^2 - \lambda u_s(x_0) / 2}  < 1
\]
for sufficiently small $\varepsilon$ and $s$. 

Since $\eta_s(x_0) < 0$, we know that $\inf \eta_s = \eta_s(x_1) \le \eta_s(x_0) < 0$ for $|x_1| > R_0$. Looking at the equation for $\eta_s$
\begin{align}
\left( \LB \frac13 - (2k  + 1)s \RB \lambda^2 s - k ^2 s^2  \right) \eta_s'' & - \LC \LB \frac13 - (2k  + 1)s \RB \lambda^2 + k  s - \LB \frac13 - (2k  + 1)s \RB \lambda u_s \RC \eta_s \nonumber\\
& + \LC \LB \frac13 - (2k  + 1)s \RB \lambda+\lambda ks - \frac{k  s}{2} u_s \RC u_s = 0, \label{fast eta eqn}
\end{align}
it follows that for $s> 0$ small, at $x_1$ we have $u_s(x_1) < 0$, and 
\[
\eta_s(x_1) \ge \frac{\LB \frac13 - (2k  + 1)s \RB \lambda +\lambda ks- \frac{k  s}{2} u_s(x_1)}{\LB \frac13 - (2k  + 1)s \RB \lambda^2 + k  s - \LB \frac13 - (2k  + 1)s \RB \lambda u_s(x_1)} u_s(x_1) > u_s(x_1).
\]
The last estimate holds because for $\varepsilon, s$ sufficiently small the fraction can be made between 0 and 1. However this would lead to a contradiction since
\[
\inf u_s = u_s(x_0) \ge \eta_s(x_0) \ge \inf \eta_s = \eta_s(x_1) > u_s(x_1).
\]
Therefore we have proved that for $s > 0$ sufficiently small, $u_s \ge 0$. A similar argument yields that $\eta_s \ge 0$ as well. 

If there is a point $x_*$ where $u_s(x_*) = 0$, then the above argument shows that $\eta_s(x_*) = 0$, and hence $x_*$ is a minimum point for $u_s$ and $\eta_s$, indicating that $u_s'(x_*) = \eta_s'(x_*) = 0$. Thus from uniqueness of ODE it must hold that $u_s = \eta_s \equiv 0$, which is a contradiction. This proves \eqref{local sign}. 
\end{proof}
Similar to Section \ref{sec stationary}, we have the following result establishing the local monotonicity and local uniqueness.
\begin{corollary}[Local monotonicity and local uniqueness]\label{mono and unique}
Denote by $\mathcal{B}_r$ the ball of radius $r>0$ in $\left( C^{2}(\R) \cap C_0(\R) \right) \times \left( C^{2}(\R) \cap C_0(\R) \right) \times \R$ centered at $(u_f,\eta_f,0)$. There exists $\varepsilon>0$ such that for $s>0$,
\begin{equation}
\mathcal{F}^{-1}(0)\cap \mathcal{B}_{\varepsilon} = \cm^{\textup{fast}}_{\textup{loc}}\cap\mathcal{B}_{\varepsilon}
\end{equation}
In addition, every solution $(u,\eta,s)\in \mathcal{F}^{-1}(0)\cap \mathcal{B}_{\varepsilon}$ is strictly monotone in that for $x>0$,
\begin{equation}
u' < 0 \qquad \text{and} \qquad \eta' < 0.
\end{equation}
\end{corollary}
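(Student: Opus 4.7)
The plan is to mirror the arguments of Corollary \ref{cor uniqueness} and Lemma \ref{lem slow wave monotonicity}, transferring them to the system \eqref{fast u eqn}--\eqref{fast eta eqn} with base point $(u_f, \eta_f, 0)$. For any $(u, \eta, s) \in \mathscr{F}^{-1}(0) \cap \mathcal{B}_\varepsilon$ with $s > 0$, the strategy has three steps: (i) promote $H^2$-closeness to strict positivity $u, \eta > 0$ on $\R$; (ii) invoke Theorem \ref{thm BS} for radial symmetry and strict radial monotonicity; (iii) use the resulting evenness to identify $(u, \eta, s)$ with the unique point of $\cm^{\textup{fast}}_{\textup{loc}}$ at parameter value $s$ via the Implicit Function Theorem appearing in Theorem \ref{thm fast way existence}.

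For step (i), I would choose $R_0 > 0$ so large that $|u_f|$ and $|\eta_f|$ are as small as desired outside $[-R_0, R_0]$; Sobolev embedding then gives the same for $|u|, |\eta|$, while on $[-R_0, R_0]$ both components remain strictly positive. If $\inf u < 0$, the infimum must be attained at some $x_0$ with $|x_0| > R_0$; applying the maximum principle to \eqref{fast u eqn} at $x_0$ and to \eqref{fast eta eqn} at the analogous infimum point $x_1$ of $\eta$ produces the cross-inequality chain $u(x_0) > \eta(x_0) \ge \eta(x_1) > u(x_1)$, contradicting $u(x_0) \le u(x_1)$. This reproduces the argument used for \eqref{local sign} in Theorem \ref{thm fast way existence} and gives $u, \eta \ge 0$; the ODE uniqueness closure from that proof then upgrades the inequalities to strict positivity.

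For step (ii), I would normalize \eqref{fast u eqn}--\eqref{fast eta eqn} by dividing through by the coefficients of $u''$ and $\eta''$, which are respectively $B_1 := \LB \tfrac{1}{3} - (2k+1)s \RB \lambda^2 - k^2 s$ and $s B_1$, both positive for small $s$. Writing the system in the $u'' + g = 0$, $\eta'' + f = 0$ form required by Theorem \ref{thm BS}, a direct computation shows that near $(0,0)$ both $\partial g/\partial \eta$ and $\partial f/\partial u$ are positive, that $\partial g/\partial u(0,0), \partial f/\partial \eta(0,0) < 0$, and that the determinant of the reaction Jacobian at the origin simplifies, after a clean cancellation, to a positive multiple of $(\lambda^2 - 1)/(s B_1)$, which is strictly positive thanks to $\lambda > 1$. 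Theorem \ref{thm BS} therefore applies and yields radial symmetry together with strict radial monotonicity. A translation places the symmetry center of $u$ at the origin; the $H^2$-closeness to the even profile $\eta_f$ then pins the symmetry center of $\eta$ to the origin as well, so that $(u, \eta) \in H^2_\even(\R) \times H^2_\even(\R)$. Within this even space the IFT from Theorem \ref{thm fast way existence} forces $(u, \eta, s) \in \cm^{\textup{fast}}_{\textup{loc}}$, giving the uniqueness claim, and the strict monotonicity $u', \eta' < 0$ on $\R^+$ is then exactly the conclusion of Theorem \ref{thm BS}.

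The main technical obstacle is verifying the hypotheses of Theorem \ref{thm BS} uniformly in $s > 0$ small. The $s$-scaling of the dispersive coefficient in the $\eta$-equation of \eqref{fast system} forces a factor of $1/s$ into the normalized reaction term $f$, and each of the quasi-monotonicity, sign, and determinant conditions must be checked against this singular scaling. Only after the explicit algebra producing the clean simplification of the determinant at the origin does it become evident that positivity persists down to $s \downarrow 0$ precisely under the essential hypothesis $\lambda > 1$, which is what makes the Busca--Sirakov mechanism applicable on the entire perturbative range.
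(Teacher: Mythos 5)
Your proposal follows essentially the same route as the paper: positivity of $(u,\eta)$ via the maximum-principle argument from the proof of Theorem \ref{thm fast way existence}, then Theorem \ref{thm BS} for symmetry and strict monotonicity, and identification with $\cm^{\textup{fast}}_{\textup{loc}}$ through the Implicit Function Theorem in the even space. Your explicit evaluation of the reaction Jacobian determinant as $(\lambda^2-1)/(sB_1)>0$ is a correct verification of condition \ref{BS det} that the paper only asserts; the paper's choice to give both equations the common second-order coefficient $sB_1$ rather than normalizing each to $1$ is a purely cosmetic difference.
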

\begin{proof}
Similarly as the proof of the sign property in Theorem \ref{thm fast way existence}, for $(u,\eta,s)\in \mathcal{F}^{-1}(0)\cap \mathcal{B}_{\varepsilon}$, we have $u,\eta>0$. Thus it suffices to check conditions \ref{BS sign 1}--\ref{BS det} of Theorem \ref{thm BS}.  Writing \eqref{fast u eqn} and \eqref{fast eta eqn} as 
\[
\left\{\begin{array}{l}
\displaystyle \left( \LB \frac13 - (2k  + 1)s \RB \lambda^2 s - k ^2 s^2  \right) u'' + g(u, \eta) = 0, \\\\
\displaystyle \left( \LB \frac13 - (2k  + 1)s \RB \lambda^2 s - k ^2 s^2  \right) \eta'' + f(u, \eta) = 0,
\end{array}\right.
\]
direct computation yields that:
\begin{equation}\label{comp BS}
\begin{split}
&\frac{\partial g}{\partial\eta}=\lambda+\lambda k-ku,\ \ \ \ \ \ \frac{\partial f}{\partial u}= \LB \frac13 - (2k  + 1)s \RB (\lambda + \eta) + ks (\lambda - u), \\
&\frac{\partial g}{\partial u}(0,0)=-(\lambda^2+k),\ \ \frac{\partial f}{\partial \eta}(0,0)=-\LB \frac13 - (2k  + 1)s \RB \lambda^2-ks.   
\end{split}
\end{equation}
When $\varepsilon$ is chosen sufficiently small, conditions \ref{BS sign 1}--\ref{BS det} of Theorem \ref{thm BS} are satisfied.
\end{proof}
\begin{remark}
In the proof above we used a relaxed version of condition \ref{BS sign 1} which only requires that $\displaystyle \frac{\partial g}{\partial v}(u_{\alpha},v), \frac{\partial f}{\partial u}(u,v_{\alpha})$ are non-negative for $(u,v)\in [0, \infty) \times [0, \infty)$ where $(u_{\alpha},v_{\alpha})$, where $(u_{\alpha},v_{\alpha})$ are reflection of the solution of the elliptic system with repect to the line $x = \alpha$.
\end{remark}

\subsection{Nodal pattern and monotone fronts}\label{subsec nodal1}
Now for each fixed $\lambda,k\in\mathbb{R}^+$ we introduce the set 
\begin{equation}\label{set O}
\mathcal{O} := \left\{ (u, \eta, s) \in \Xspace \times \R^+: s\in\Gamma_1, u\in \Gamma_2  \right\}.
\end{equation}
where 
\begin{align*}
\Gamma_1 := \left\{s\in \mathbb{R}^+:\ \frac{\lambda^2}{3} - \LB (2k + 1)\lambda^2 + k^2 \RB s>0 \right\}, \quad 
\Gamma_2 := \left\{u:\|u\|_{C^0(\mathbb{R})} < \lambda \right\}.
\end{align*}
The intuition for the choice of $\mathcal{O}$ is that $\Gamma_1$ is needed for the ellipticity, and $\Gamma_2$ provides a sufficient condition to ensure conditions \ref{BS sign 1}--\ref{BS det} in Theorem \ref{thm BS}, in particular the condition \ref{BS sign 1} for (strict) quasi-monotonicity. Indeed from \eqref{comp BS} we see that $\partial_\eta g, \partial_u f > 0$ when
\[
u < \lambda + \frac{\lambda}{k} \qquad \text{and} \qquad u < \lambda + \frac{\frac13 - (2k+1)s}{ks} (\lambda + \eta).
\]
Moreover, this constraint also allows one to deduce from \eqref{fast eta eqn} an upper bound for $\eta$
\be\label{upper bd eta}
\eta_s(0) \le \frac{\LB \frac13 - (2k+1)s \RB \lambda + \LC \lambda - \frac{u_s(0)}{2} \RC ks}{\LB \frac13 - (2k+1)s \RB \lambda \big( \lambda - u_s(0) \big) + ks} u_s(0).
\ee
Constraint $\Gamma_2$ can also be understood as a ``no stagnation" condition and indicates that the particles travel behind the wave. 

From Theorem \ref{thm fast way existence} and Corollary \ref{mono and unique} we are led to consider the following nodal property:
\begin{subequations}\label{nodal1}
\begin{alignat}{2}
u & > 0, \quad \eta > 0 &\qquad& \textrm{in }\ \R,  \label{nodal1 u, eta}\\
u' & < 0, \quad \eta' < 0 && \textrm{in }\ \R^+, \label{nodal1 u eta prime}
\end{alignat}
\end{subequations}

Similarly to the previous section, we can prove that the above nodal property persists on the solution curve. The proof follows along the same line as the one in Lemma \ref{lem noda}, and hence we omit it. 
\begin{lem}[Nodal property]\label{lem noda1}
If $\mathcal{K}$ is any connected subset of $\mathcal{O} \cap \mathscr{F}^{-1}(0)$ that contains $\mathscr{C}^{\textup{fast}}_{\textup{loc}}$, then every $(u, \eta, \lambda) \in \mathcal{K}$ exhibits \eqref{nodal1}.
\end{lem}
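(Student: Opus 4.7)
My plan is to mirror the three-step connectedness argument used for Lemma \ref{lem noda}. First I would isolate the subset
\[
\mathcal{N} := \{ (u, \eta, s) \in \mathcal{K} : (u, \eta, s) \text{ satisfies } \eqref{nodal1} \},
\]
and note from Theorem \ref{thm fast way existence} together with Corollary \ref{mono and unique} that $\cm^{\textup{fast}}_{\textup{loc}} \subset \mathcal{N}$, so $\mathcal{N}$ is nonempty. The objective is then to show that $\mathcal{N}$ is both open and closed in the relative topology of $\mathcal{K}$, whereupon connectedness forces $\mathcal{N} = \mathcal{K}$.

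For the open property I would prove a fast-wave analogue of Lemma \ref{lem open}: if $(u_*, \eta_*, s_*) \in \mathcal{O} \cap \mathscr{F}^{-1}(0)$ satisfies \eqref{nodal1}, then so does every nearby $(u, \eta, s) \in \mathcal{O} \cap \mathscr{F}^{-1}(0)$. The strict positivity \eqref{nodal1 u, eta} follows from the maximum-principle-by-contradiction scheme used in the sign argument of Theorem \ref{thm fast way existence}, with the reference point $(u_f, \eta_f, 0)$ replaced by $(u_*, \eta_*, s_*)$: any putative zero of $u$ or $\eta$ is forced into the far field, where the cross-coupling coefficients $\lambda + k(\lambda - u)$ and $[\tfrac{1}{3} - (2k+1)s]\lambda + ks(\lambda - u/2)$ remain positive thanks to the $\Gamma_2$ constraint $u < \lambda$; combined with $\Gamma_1$-ellipticity they produce a chained comparison between $\inf u$ and $\inf \eta$ that is self-contradictory. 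The strict monotonicity \eqref{nodal1 u eta prime} is then supplied by Theorem \ref{thm BS}, whose quasi-monotonicity hypothesis \ref{BS sign 1} is encoded in $\Gamma_2$ via \eqref{comp BS}.

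For the closed property, if $(u_n, \eta_n, s_n) \to (u, \eta, s) \in \mathcal{O} \cap \mathscr{F}^{-1}(0)$ with each $(u_n, \eta_n, s_n)$ satisfying \eqref{nodal1}, then the limit inherits the weak inequalities $u \ge 0$, $\eta \ge 0$, $u' \le 0$, $\eta' \le 0$ on $\R^+$. A touching point $u(x_0) = 0$ is a global minimum, so $u'(x_0) = 0$ and $u''(x_0) \ge 0$; plugging into \eqref{fast u eqn} forces $\eta(x_0) = 0$, hence $\eta'(x_0) = 0$, and ODE uniqueness for the fourth-order system \eqref{fast u eqn}--\eqref{fast eta eqn} then yields $u = \eta \equiv 0$, which is excluded in $\mathcal{K}$. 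An identical argument rules out a touching point of $\eta$, while Theorem \ref{thm BS} applied to the limit upgrades the weak monotonicity to strict. Finally, local uniqueness from Corollary \ref{mono and unique} identifies $\mathcal{K}$ with $\cm^{\textup{fast}}_{\textup{loc}}$ in a neighborhood of the bifurcation point, closing the connectedness loop.

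The main obstacle I anticipate is the openness step. The maximum principle applied to the weakly coupled equations \eqref{fast u eqn}--\eqref{fast eta eqn} requires the cross-coupling coefficients to retain definite signs under perturbation, and the chained-comparison contradiction between $\inf u$ and $\inf \eta$ is productive only while the no-stagnation bound $\|u\|_{L^\infty} < \lambda$ holds. This is precisely why the constraint $\Gamma_2$ is built into $\mathcal{O}$; without it the system ceases to be cooperative at large amplitudes and both the maximum principle and Theorem \ref{thm BS} become inapplicable. Once openness is in hand, the closed property and the connectedness conclusion follow the Lemma \ref{lem noda} template essentially verbatim.
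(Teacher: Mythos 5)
Your proposal is correct and follows essentially the same route as the paper: the authors explicitly omit this proof, stating that it "follows along the same line" as Lemma \ref{lem noda}, i.e.\ the open/closed/connectedness template with the maximum-principle sign argument supplying openness, the touching-point plus ODE-uniqueness argument supplying closedness, and the local uniqueness of Corollary \ref{mono and unique} anchoring the curve near the bifurcation point. Your additional observation that the $\Gamma_2$ constraint is what keeps the system cooperative (so that Theorem \ref{thm BS} and the maximum principle remain applicable) matches the paper's stated intuition for the choice of $\mathcal{O}$.
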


The next step regards the nonexistence of monotone fronts, which will provide useful information for the global theory. As in Section \ref{subsec mf}, we define the concept of monotone fronts as follows.
\begin{definition}\label{def front1}
Let $s\in \Gamma_1, u\in \Gamma_2$. we say $(u, \eta, \lambda)$ is a monotone front solution of \eqref{fast system} if $(u, \eta) \in C^2_\bdd(\R) \times C^2_\bdd(\R)$, and
\begin{equation}\label{fast front}
\lim_{x\to +\infty}(u(x), \eta(x)) = (0, 0), \quad \text{and} \quad u > 0, \quad \eta > 0, \quad u' \le 0, \quad \eta' \leq 0 \quad \text{in }\ \R.
\end{equation}
\end{definition}
\begin{lem}[Nonexistence of monotone fronts]\label{lem no fronts 1}
If $s\in \Gamma_1, u\in \Gamma_2$ then system \eqref{fast system} does not admit any monotone front solution in the sense of \eqref{fast front}.
\end{lem}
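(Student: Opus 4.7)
The plan is to follow the strategy of Lemma \ref{lem no fronts}: assume toward contradiction that a monotone front $(u,\eta)$ of \eqref{fast system} exists, read off the algebraic value of the limit at $-\infty$ from the equations, and then derive a contradictory inequality via a carefully chosen energy-type identity.

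\textbf{Step 1 (Limit at $-\infty$).} Bounded monotone functions have limits, so $(\bar u,\bar\eta) := \lim_{x\to -\infty}(u,\eta)$ exists; both are strictly positive, since the vanishing of either would combine with monotone decrease to $0$ at $+\infty$ to contradict the strict positivity in \eqref{fast front}. A standard ODE argument for bounded solutions of the fourth-order system also gives $u^{(j)},\eta^{(j)} \to 0$ at $\pm\infty$ for $j = 1,2$. Passing to the limit in \eqref{fast system} yields the algebraic pair
\begin{equation*}
\bar u - \lambda\bar\eta + \bar u\bar\eta = 0, \qquad -\lambda \bar u + \bar\eta + \tfrac12 \bar u^2 = 0,
\end{equation*}
so $\bar\eta = \bar u/(\lambda - \bar u)$ and $\bar u^2 - 3\lambda\bar u + 2(\lambda^2 - 1) = 0$. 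Writing $\mu := \lambda - \bar u$, this quadratic rearranges to the clean relation $\mu(\lambda + \mu) = 2$; combined with the constraint $u \in \Gamma_2$ (i.e.\ $\mu > 0$), this selects the admissible root $\bar u = \tfrac12[3\lambda - \sqrt{\lambda^2 + 8}]$.

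\textbf{Step 2 (Energy identity).} The key observation is that multiplying the first equation of \eqref{fast system} by $\eta'$, multiplying the second by $u'$, adding, and handling the only non-exact cross term via $\eta u\eta' = (\tfrac12 u \eta^2)' - \tfrac12 u'\eta^2$, produces $H' = \tfrac12 u'\eta^2$ with
\begin{equation*}
H := ks\,u'\eta' + \tfrac{\lambda s}{2}(\eta')^2 + \tfrac{[\frac13 - (2k+1)s]\lambda}{2}(u')^2 + u\eta - \tfrac{\lambda}{2}(u^2 + \eta^2) + \tfrac12 u\eta^2 + \tfrac16 u^3.
\end{equation*}
Since $u' \le 0$ on a monotone front, $H' \le 0$. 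Combining $H(+\infty) = 0$ (all summands vanish) with the decay of derivatives at $-\infty$, the monotonicity $H(-\infty) \ge 0$ translates into
\begin{equation*}
\bar u \bar\eta - \tfrac{\lambda}{2}(\bar u^2 + \bar\eta^2) + \tfrac12 \bar u \bar\eta^2 + \tfrac16 \bar u^3 \ge 0.
\end{equation*}

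\textbf{Step 3 (Algebraic contradiction).} Substituting $\bar\eta = \bar u/\mu$, dividing by $\bar u^2$ and multiplying through by $2\mu^2$, a short rearrangement reduces the inequality to $\mu(2\lambda + \mu) \le 3$. Subtracting the Step 1 identity $\mu(\lambda + \mu) = 2$ yields the equivalent requirement $\lambda\mu \le 1$. But $\bar u > 0$ forces $\mu < \lambda$, so $2\lambda\mu > \mu(\lambda + \mu) = 2$, i.e.\ $\lambda\mu > 1$, contradicting $\lambda\mu \le 1$.

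\textbf{Main obstacle.} Because $b \ne d$ off the single curve $s = \tfrac{1}{6(k+1)}$, there is no Hamiltonian structure available, so the delicate point is identifying the quasi-energy $H$ in Step 2 whose derivative $\tfrac12 u'\eta^2$ happens to be sign-definite exactly in the monotone-front regime. The second delicacy is the algebraic collapse in Step 3: the seemingly cubic front inequality, after exploiting the limit identity $\mu(\lambda + \mu) = 2$ from Step 1, reduces to the one-dimensional criterion $\lambda\mu > 1$, which is forced precisely by the positivity $\bar u > 0$ — so the argument closes uniformly in $(k, s)$ on $\Gamma_1$ without any further smallness assumption, unlike the parallel Lemma \ref{lem no fronts} which required $\beta^2 < 0.26$.
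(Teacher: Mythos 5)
Your proof is correct and follows essentially the same route as the paper: identify the limit $(\bar u,\bar\eta)$ at $-\infty$ from the algebraic system, derive the same quasi-energy identity with $H' = \tfrac12 u'\eta^2 \le 0$, and contradict the resulting front inequality. The only (cosmetic) difference is in the final algebra, where you exploit the identity $\mu(\lambda+\mu)=2$ in place of substituting the explicit root $\bar u = \tfrac12\bigl(3\lambda - \sqrt{\lambda^2+8}\bigr)$; both reduce to the same inequality $\mu(2\lambda+\mu)\le 3$, which fails precisely because $\lambda>1$.
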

\begin{proof}
The proof is similar to Lemma \ref{lem no fronts} but the algebra is simpler. Suppose $(u, \eta)$ is a monotone front solution to \eqref{fast system}. Let 
\[
(\bar u, \bar\eta) := \lim_{x\to -\infty} (u(x), \eta(x)).
\]
So $\bar u,\bar\eta >0$. Evaluating \eqref{fast system} at $x \to -\infty$ implies that 
\begin{equation}\label{front root 1}
\bar u = \frac{3\lambda-\sqrt{\lambda^2+8}}{2}
\end{equation}
and 
\begin{equation}\label{relation eta u 1}
    \bar\eta = \frac{\bar u}{\lambda-\bar u}.
\end{equation}
Multiplying the first equation in \eqref{fast system} by $\eta'$, the second equation by $u'$, and then summing them gives
\begin{equation}\label{int eqn 1}
\begin{split}
&\LB \frac{\LC \frac{1}{3}-(2k+1)s\RC\lambda}{2}(u')^2+\frac{\lambda s}{2}(\eta')^2+ksu'\eta'+u\eta-\frac{\lambda}{2}u^2-\frac{\lambda}{2}\eta^2+\frac{1}{6}u^3
\RB'\\
& +u\eta\eta'=0
\end{split}
\end{equation}
Write $u\eta\eta'= \LC\frac{1}{2}u\eta^2\RC'-\frac{1}{2}u'\eta^2$, by definition of monotone front, we have $u'\eta^2\leq 0$ and thus
\begin{equation}\label{fast int}
    \bar u\bar\eta-\frac{\lambda}{2}\bar u^2-\frac{\lambda}{2}\bar\eta^2+\frac{1}{6}\bar u^3+\frac{1}{2}\bar u\bar\eta^2\geq 0
\end{equation}
From \eqref{relation eta u 1}, this can be reduced to 
\begin{equation*}
\LC \lambda-\bar u\RC\LC \frac{\bar u}{3}-\lambda\RC+1\geq 0    
\end{equation*}
and from \eqref{front root 1} we finally have 
\begin{equation*}
    \frac{1}{12}\LC-\lambda+\sqrt{\lambda^2+8}\RC\LC-3\lambda-\sqrt{\lambda^2+8}\RC+1\geq 0
\end{equation*}
the above inequality holds only when $\lambda\leq 1$, which contradicts the fact that $\lambda>1$.
\end{proof}
\subsection{Global continuation}\label{sec global 1}
As in Section \ref{sec global}, with Lemma \ref{lem noda1} and \ref{lem no fronts 1}, we obtain the following global solution curve:
\begin{theorem}\label{thm global bif fast}
There exists a curve $\cm^{\textup{fast}}$ containing $\cm^{\textup{fast}}_{\textup{loc}}$, which admits a global $C^0$ parametrization
\[
\cm^{\textup{fast}} := \left\{ \LC u(t), \eta(t), s(t) \RC: \ t\in (0, \infty) \right\} \subset \mathcal{O} \cap \mathscr{F}^{-1}(0)
\]
with $\lim_{t\searrow0}\LC u(t), \eta(t), s(t) \RC = \LC u_f, \eta_f, 0 \RC$ and satisfies the following property:
\begin{enumerate}[label=\rm(\alph*)]
\item \label{global monotone part 1} \textup{(Symmetry and monotonicity)} Each solution on $\cm^{\textup{fast}}$ is even and
\begin{equation*}
  \begin{aligned}
    u(t) &> 0 & \quad & \eta(t) > 0 & \quad &  \textrm{on } \mathbb{R}, \\
    \partial_x u(t) & < 0 & \quad & \partial_x \eta(t) < 0 & \quad & \textrm{on } \mathbb{R^+}.
  \end{aligned} \label{monotonicity soln} 
\end{equation*}
\item \label{loss of ellipticity part 1} \textup{(Loss of ellipticity or stagnation limit)} Following $\cm^{\textup{fast}}$ to its extreme, either the system loses ellipticity in that 
\begin{equation}\label{fast extreme 1}
\lim_{t \to \infty} s(t) = \frac{\lambda^2}{3\LC (2k+1)\lambda^2+k^2\RC},
\end{equation}
or we encounter waves that are arbitrarily close to having a stagnation point 
\begin{equation} \label{fast extreme 2}
 \lim_{t \to \infty} \inf_{x\in \R} \LC \lambda - u(t) \RC = 0.
\end{equation}
\end{enumerate}
\end{theorem}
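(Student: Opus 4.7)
The proof proceeds in parallel with the argument for Theorem \ref{thm global slow}. The plan is to apply the analytic global implicit function theorem of \cite{CWW} to extend the local curve $\cm^{\textup{fast}}_{\textup{loc}}$ (in the Hölder spaces $\Xspace\times\R$ analogous to Section \ref{sec global}, with equivalence to the $H^2$ setting guaranteed by elliptic regularity), then to winnow down the resulting four alternatives \ref{blowup alternative}--\ref{loop alternative} to the two stated in \eqref{fast extreme 1}--\eqref{fast extreme 2}. The hypothesis required to invoke the global theorem is the invertibility of $\mathscr{F}_U(U_f,0)$, which was already established in the proof of Theorem \ref{thm fast way existence}, and property \ref{global monotone part 1} is an immediate consequence of Lemma \ref{lem noda1}.

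To dispose of \ref{loss of fredholmness alternative} and \ref{loop alternative}, I would recycle the arguments from Section \ref{sec global}: at any point of $\cm^{\textup{fast}}$ the linearized operator $\mathscr{F}_{(u,\eta)}$ is a fourth-order ODE operator, hence Fredholm, and by a standard homotopy argument starting from the isomorphism at $t=0$ it must have index zero along the entire connected curve. The closed-loop alternative is excluded by combining the local uniqueness of Corollary \ref{mono and unique}, the nodal persistence of Lemma \ref{lem noda1}, and the unique determination of $(u_f,\eta_f)$ by $\lambda$ recorded in \eqref{base soln prop}. The loss of compactness \ref{loss of compactness alternative} is handled via a ``compactness or front'' lemma entirely analogous to Lemma \ref{compactness or front lemma}: given a bounded sequence of monotone solutions staying inside a compact subset of $\mathcal{O}$, either a subsequence converges in $\Xspace$, or translating by $x_n \to +\infty$ produces a subsequential $C^2_\loc$ limit whose weak monotonicity is promoted by the maximum principle applied to \eqref{fast u eqn}--\eqref{fast eta eqn} to the strict signs required in Definition \ref{def front1}. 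Lemma \ref{lem no fronts 1} then furnishes the contradiction.

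The main step, and the only one that genuinely differs in character from Section \ref{sec global}, is the analysis of \ref{blowup alternative}. Since $s(t) \in \Gamma_1 \subset (0, s_*)$ with $s_* := \frac{\lambda^2}{3[(2k+1)\lambda^2 + k^2]}$, the parameter $s(t)$ itself is bounded. The degeneration $\textup{dist}((u(t),\eta(t),s(t)), \partial \mathcal{O}) \to 0$ therefore means either $s(t) \to s_*$, which is precisely loss of ellipticity \eqref{fast extreme 1}, or $\|u(t)\|_{L^\infty} \to \lambda$, which (using monotonicity, so $\|u(t)\|_{L^\infty} = u(t,0)$) is the stagnation limit \eqref{fast extreme 2}. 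To complete the dichotomy it remains to rule out $\Xspace$-norm blowup while both $s(t)$ stays bounded away from $s_*$ and $u(t,0)$ stays bounded away from $\lambda$ along some sequence $t_n \to \infty$.

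The hard part, I expect, is this uniform bound. Under the standing assumptions one has $u(t_n,0) \le \lambda - \delta$ and $s_n \le s_* - \delta$ for some $\delta > 0$ independent of $n$. The upper bound \eqref{upper bd eta} then produces a uniform $L^\infty$ bound on $\eta(t_n,0) = \|\eta(t_n)\|_{L^\infty}$ depending only on $\delta$, $\lambda$, and $k$. Once $(u,\eta)$ are uniformly bounded in $L^\infty$, standard elliptic regularity applied to the uniformly elliptic system \eqref{fast system}, whose coefficients stay in a compact set by the $s$-constraint and whose nonlinearities are quadratic in the unknowns, upgrades the $L^\infty$ bound to a uniform $C^{2+\alpha}_\bdd$ bound, contradicting $\|(u(t_n),\eta(t_n))\|_{\Xspace} \to \infty$. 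The delicate bookkeeping will be verifying that the upper bound \eqref{upper bd eta} together with monotonicity really does control $\|\eta\|_{L^\infty}$ quantitatively in terms of $\delta$ alone; once that is in hand, the remaining regularity steps are routine and the theorem follows.
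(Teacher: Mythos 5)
Your proposal follows essentially the same route as the paper: the same global bifurcation framework and winnowing of the alternatives via the Fredholm/homotopy, nodal, and no-front lemmas, and the same key step of combining the bound \eqref{upper bd eta} with the constraint $u<\lambda$ from $\Gamma_2$ and elliptic regularity to exclude $\Xspace$-norm blowup, leaving only the two stated limits. Your write-up is in fact more explicit than the paper's very terse proof; the only point both treatments leave implicit is why $s(t)$ stays bounded away from $0$ (needed so that the denominator $ks_n$ in \eqref{upper bd eta} does not degenerate), which the paper simply asserts from local uniqueness.
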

\begin{proof}
We will only focus on proving \ref{loss of ellipticity part 1}. Since $\lim_{t\to\infty}s(t) > 0$, so if \eqref{fast extreme 1} is false, we can find a sequence $\{t_n\} \to \infty$ with the corresponding solutions denoted by $(u_n, \eta_n, s_n) \in \mathcal{O} \cap \mathscr{F}^{-1}(0)$ such that as $n \to \infty$,
\[
s_n \to s_* < \frac{\lambda^2}{3\LC (2k+1)\lambda^2+k^2\RC}, \qquad \text{either }\ \|(u_n, \eta_n)\|_{\Xspace} \to \infty \text{ or }\ \|u_n\|_{C^0} \to \lambda.
\]
Recall from \eqref{upper bd eta} the upper bound for $\eta_n$
\[
\bar\eta_n \le \frac{\bar s_n \lambda + \LC \lambda - \frac{\bar u_n}{2} \RC ks_n}{\bar s_n \lambda (\lambda - \bar u_n) + ks_n} \bar u_n \le \frac{2 \bar s_n \lambda + \lambda k s_n}{2k s_n} \lambda,
\]
where $\bar s_n := \frac13 - (2k+1)s_n$. Elliptic regularity then asserts that the latter alternative can be replaced by
\[
\|u_n\|_{C^0} \to \lambda,
\]
proving \eqref{fast extreme 2}.
%
%
\end{proof}

\section*{Acknowledgements}
R. M. Chen and J. Jin are supported in part by the NSF grants DMS-1613375 and DMS-1907584.

\appendix

\section{Proofs from Section \ref{subsec stationary}}\label{appendix}

In this appendix we provide the proofs of the properties for the stationary wave problem \eqref{standing eqn} stated in Section \ref{subsec stationary}.

\begin{proof}[Proof of Lemma \ref{lem eta sign standing}]
If there is an $x_{1}$ such that $\eta (x_{1})\geq 0.$ This property,
and the fact that $\eta (x)\rightarrow 0$ as $x\rightarrow \pm \infty ,$
imply that $\eta $ has a non-negative maximum on $\R.$ Suppose that this maximum is at $x_{0}.$ 
 Then from \eqref{standing eqn}, either $\eta \left( x_{0}\right) =u\left(
x_{0}\right) =0$ or $\eta ^{\prime \prime }\left( x_{0}\right) >0.$ The
latter being impossible at a maximum, we conclude the former two equalities.
Also, $\eta ^{\prime }\left( x_{0}\right) =0,$ and from (\ref{eq1aav}), $%
u^{\prime }\left( x_{0}\right) =0.$ But then, since $\eta ,$ $\eta ^{\prime
},$ $u,$ and $u^{\prime }$ all vanish at $x_{0},$ the uniqueness theorem for
ODE's implies that $\eta \left( x\right) =u\left( x\right) =0$ for all $x,$
contradicting the definition of a solitary wave, which must be
non-constant. This completes the proof of Lemma \ref{lem eta sign standing}.
\end{proof}

A quick application of the maximum principle also yields
\begin{lem}\label{lem u sign standing}  Let $(u,\eta)$ be a solitary wave solution. Then
\begin{enumerate}[label=\rm(\alph*)]
\item\label{u symmetry} $(-u,\eta)$ is also a solitary wave solution.

\item\label{u positivity} If $u \ge 0$ then $u(x)>0.$

\item\label{u negativity} If $u \le 0$ then $u(x)<0.$ 
\end{enumerate}
\end{lem}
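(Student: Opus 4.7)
My plan is to handle the three parts in the order \ref{u symmetry}, \ref{u positivity}, \ref{u negativity}, using the first item to reduce the third to the second.

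For \ref{u symmetry}, the proof is a direct symmetry observation. The first equation of \eqref{standing eqn} depends on $u$ only through $u^2$, and the second equation is linear in $u$, so each side simply changes sign under $u \mapsto -u$. Thus $(u,\eta)\in H^1(\R)\times H^1(\R)$ satisfying \eqref{standing eqn} and \eqref{asymptotics} implies the same for $(-u,\eta)$.

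For \ref{u positivity}, I would read the second equation of \eqref{standing eqn}, with $a=-\beta^2$, as a linear second-order ODE for $u$ with continuous coefficient:
\[
\beta^2 u'' - (1+\eta(x))\, u = 0.
\]
Suppose, toward a contradiction, that $u\ge 0$ on $\R$ but $u(x_0)=0$ at some $x_0$. Then $x_0$ is a global minimum of $u$, so $u'(x_0)=0$. The uniqueness theorem for the initial value problem associated with the above linear ODE then forces $u\equiv 0$. Substituting $u\equiv 0$ into the first equation of \eqref{standing eqn} yields $\beta^2\eta''-\eta=0$, whose only solution satisfying $\eta(x)\to 0$ as $|x|\to\infty$ (recall \eqref{asymptotics}) is $\eta\equiv 0$. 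This contradicts the hypothesis that $(u,\eta)$ is a (nontrivial) solitary wave, so $u>0$ on $\R$.

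For \ref{u negativity}, I appeal to part \ref{u symmetry}: if $u\le 0$, then $(-u,\eta)$ is a solitary wave with $-u\ge 0$, hence by \ref{u positivity} we obtain $-u>0$ on $\R$, i.e., $u<0$. The main obstacle, if any, is just making sure the trivial pair $(0,0)$ is excluded from the definition of a solitary wave, which is tacit in the problem formulation and the asymptotic condition \eqref{asymptotics}; modulo this point, all three statements reduce to the algebraic symmetry of the system together with the standard uniqueness theorem for linear second-order ODEs.
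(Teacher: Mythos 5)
Your proposal is correct and follows essentially the same route as the paper: the symmetry of \eqref{standing eqn} under $u\mapsto -u$ for part \ref{u symmetry}, and for part \ref{u positivity} the observation that an interior zero of a nonnegative $u$ forces $u'=0$ there, whence ODE uniqueness gives $u\equiv 0$ and then $\eta\equiv 0$, contradicting nontriviality. The only (immaterial) difference is that you derive part \ref{u negativity} from \ref{u symmetry} and \ref{u positivity} rather than repeating the argument, which is a clean shortcut.
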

\begin{proof}
Part \ref{u symmetry} follows directly from \eqref{standing eqn}. For \ref{u positivity}, suppose that $u(x)\geq 0$ and that $u(x_{1})=0$ for some $x_{1}.$ Since $u$ is non-negative it must be the case
that $u^{\prime }(x_{1})=0.$ Thus, by uniqueness of the constant solution $%
(u,u^{\prime })=(0,0)$ of 
\begin{equation*}
-a u^{\prime \prime }=u(1+\eta ),  
\end{equation*}%
we conclude that $u(x)\equiv 0$. In this case, $\eta ^{\prime \prime
}=\eta $ on $\R.$ But since the orbit is
homoclinic, $\eta $ must be bounded, which implies that $\eta \left(
x\right) =0$ for all $x$ as well, and this contradicts the definition
of solitary waves and Lemma \ref{lem eta sign standing}. Part \ref{u negativity} follows by a similar argument. 
\end{proof}

\begin{proof}[Proof of Proposition \ref{prop standing}]
The proof part \ref{standing soln 1} makes use of  function
\begin{equation*}
h=u- \sqrt 2\eta,
\end{equation*}
which satisfies  
\begin{equation}\label{eq1bb}
\beta^2 h^{\prime\prime}=\left (1-{\frac {u}{\sqrt 2}}\right )h.
\end{equation}
If $(u, \eta)$ is a solitary wave solution with $u < \sqrt{2}$, then $1 - u/\sqrt{2} > 0$. Then from maximum principle we know that $h \equiv 0$.

Substituting $h=u-\sqrt 2\eta=0$ into~(\ref{eq1aav}) we get  
\begin{equation*}
\beta^2(\eta^{\prime})^{2}=\eta^{2}+{\frac {2}{3}}\eta^{3}.
\end{equation*}
which is the classical steady KdV equation. The solution is given by
\[
\eta_0(x) = -{3\over2} \mathrm{sech}^2\left({x\over 2\beta} \right), \quad \text{and hence} \quad u^-_0(x) = -{3\sqrt{2}\over2} \hbox{sech}^2\left({x\over 2\beta} \right).
\]

The proof of \ref{standing soln 2} makes use of the
functional $w=u+\sqrt{2}\eta ,$ which satisfies 
\begin{equation*}
\beta^2w^{\prime \prime }=\left( 1+{\frac{u}{\sqrt{2}}}\right) w.  
\end{equation*} 
The remainder of the proof follows the same argument as before, with $w$ replacing $h.$ 
\end{proof}
%



\end{document}